\numberwithin{equation}{section}
\newtheorem{theorem}{Theorem}[section]
\newtheorem{lemma}[theorem]{Lemma}
\newtheorem{proposition}[theorem]{Proposition}
\newtheorem{corollary}[theorem]{Corollary}
\theoremstyle{definition}
\newtheorem{definition}[theorem]{Definition}
\theoremstyle{remark}
\newtheorem{remark}[theorem]{Remark}
\numberwithin{equation}{section}
\newcommand{\ind}{\operatorname{ind}}
\newcommand{\id}{\operatorname{id}}
\newcommand{\dm}{\partial M}
\newcommand{\CC}{\mathbb{C}}
\newcommand{\RR}{\mathbb{R}}
\newcommand{\ZZ}{\mathbb{Z}}
\newcommand{\AAA}{\mathcal{A}}
\newcommand{\DD}{\mathcal{D}}
\newcommand{\cHH}{\check{H}}
\newcommand{\hHH}{\hat{H}}
\newcommand{\bfu}{\mathbf{u}}
\newcommand{\bfv}{\mathbf{v}}
\newcommand{\upper}{\uppercase\expandafter}
\newcommand{\n}{\nabla}
\newcommand{\p}{\partial}
\newcommand{\pM}{{\p M}}
\newcommand{\codim}{\operatorname{codim}}
\newcommand{\End}{\operatorname{End}}
\newcommand{\range}{\operatorname{range}}
\newcommand{\ad}{{\rm ad}}
\newcommand{\tAAA}{{\AAA}}
\newcommand{\E}{\tilde{E}_\pM}
\begin{document}

\author{Maxim Braverman${}^\dag$}
\address{Department of Mathematics,
Northeastern University,
Boston, MA 02115,
USA}

\email{maximbraverman@neu.edu}
\urladdr{www.math.neu.edu/~braverman/}

\author{Pengshuai Shi}
\address{Beijing International Center for Mathematical Research (BICMR),
Beijing (Peking) University,
Beijing 100871,
China}

\email{pengshuai.shi@gmail.com}

\subjclass[2010]{58J28, 58J30, 58J32, 19K56}
\keywords{Callias, index,  boundary value problem, chiral anomaly, cobordism}
\thanks{${}^\dag$†Partially supported by the Simons Foundation collaboration grant \#G00005104.}

\title[Index of a local boundary value problem]{The index of a local boundary value problem for strongly Callias-type operators}

\begin{abstract}
We consider a complete Riemannian manifold $M$ whose boundary is a  disjoint union of finitely many complete  connected Riemannian manifolds. We compute the index of a local boundary value problem for a strongly Callias-type operator on $M$. Our result extends an index theorem of D.~Freed to non-compact manifolds, thus providing a new insight on the Ho\v{r}ava-Witten anomaly. 
\end{abstract}

\maketitle

\setcounter{tocdepth}{1}

\section{Introduction}\label{S:introduction}

Mathematical description of many anomalies in quantum field theory is given by  index theorems for a boundary value problems, cf. \cite{Witten85GravAnom,AtSinger84,Freed86,BarStrohmaier16} (see \cite[Ch.~11]{Bertlmann01} for more details). A new type of anomalies, related to index computation on an {\em odd dimensional} space $\RR^{10}\times [0,1]$, was discovered by Ho\v{r}ava and Witten in \cite{HoravaWitten96}. 
Freed, \cite{Freed98}, replaced $\RR^{10}$ with a compact manifold, and proved a new index theorem for a {\em local boundary value problem} on a compact manifold with boundary, which explains the Ho\v{r}ava-Witten anomaly, but only for this compact case. It is desirable to give a mathematically rigorous construction of such an index on non-compact manifolds with non-compact boundary. 

A systematic treatment of boundary value problems for strongly Callias-type operators on non-compact manifolds with non-compact boundary was given in \cite{BrShi17}, where we extended  the approach of \cite{BarBallmann12} to non-compact setting. In particular, we defined elliptic boundary conditions and proved that the corresponding boundary value problem is Fredholm. We emphasize that these results are valid for so called  strongly Callias-type operators --- Dirac operators coupled with an electric potential satisfying certain growth conditions at infinity. 

One advantage of the approach to boundary value problems in \cite{BarBallmann12,BrShi17} is that it unifies local and non-local (eg. Atiyah--Patodi--Singer) boundary conditions. In \cite{BrShi17,BrShi17b} we studied the index defined by (generalized) Atiyah--Patodi--Singer boundary conditions on manifolds with non-compact boundary. In \cite{Shi18} the second author studied the Calder\'on projection in the non-compact situation and obtained an expression of the index of an elliptic boundary value problem in terms of the relative index of the Calder\'on projection and the projection onto the boundary conditions. In the current paper we study local boundary conditions for Callias-type operators and prove a non-compact analogue of Freed's index theorem \cite[Theorem~B]{Freed98}.

We now give a brief description of our results.

Let $M$ be a complete Riemannian manifold with non-compact boundary $\pM$. We assume that $\pM=\bigsqcup_{j=1}^k N_j$  is a disjoint union of finitely many connected components. Then each $N_j$ is a complete manifold without boundary. Let $E$ be an (ungraded) Dirac bundle over $M$ and let $D$ denote the Dirac operator on $E$. Let $\DD=D+\Psi$ be a formally self-adjoint Callias-type operator on $M$. We impose slightly stronger conditions on the growth of the potential $\Psi$ and call the operators satisfying these conditions {\em strongly Callias-type}. On manifolds without boundary these conditions guarantee that $\DD$ has a discrete spectrum. 

The restriction $\AAA_j$ ($j=0,\ldots,k$) of $\DD$ to the boundary component $N_j$ is a self-adjoint strongly Callias-type operator on $N_j$ and, hence, has discrete spectrum. In particular, it is Fredholm. Moreover, the Clifford multiplication by the unit normal vector to the boundary defines a grading on $E_{N_j}:= E|_{N_j}$ and $\AAA_j$ is odd with respect to this grading. We denote by $\ind\AAA_j$ its index. 

Set $B_j^\pm:= L^2(N_j,E_{N_j}^\pm)$. Let $\epsilon:= (\epsilon_1,\ldots,\epsilon_k)$ where $\epsilon_j=\pm$ and set $B^\epsilon:= \bigoplus_{j=1}^kB_j^{\epsilon_j}$. Then $B^\epsilon\subset L^2(\pM,E_{\pM})$, where $E_{\pM}:=E|_{\pM}$. We use the result in Section \ref{S:ellbc} to show that $B^\epsilon$ defines an elliptic boundary condition for $\DD$. Hence the operator 
\begin{equation}\label{E:IDB}
	\DD_{B^\epsilon}:\, 
	\big\{u\in L^2(M,E):\, u|_\pM\in B^\epsilon\,\big\}
	\ \to \ 
	L^2(M,E). 
\end{equation}
is Fredholm. 

Our main result in this paper (cf. Theorem~\ref{T:mixindfor}) is the following generalization of of \cite[Theorem~B]{Freed98}:
\begin{equation}\label{E:IindBepsilon}
\ind\DD_{B^\epsilon}\;=\;\sum_{\{j\,:\,\epsilon_j=+\}}\ind\AAA_j\;=\;-\sum_{\{j\,:\,\epsilon_j=-\}}\ind\AAA_j.
\end{equation}

If all $\epsilon_j$ have the same sign, then \eqref{E:IindBepsilon} becomes 
\[
	\ind\DD_{B^\epsilon}\ = \ \sum_{j=1}^k\, \ind \AAA_j\ = \ 0.
\] 
This gives a new proof of the cobordism invariance of the index of Callias-type operators, cf. \cite{BrShi16}. 

Freed, \cite{Freed98}, only considers the case when the dimension of $M$ is odd. This is because the index of an elliptic differential operator on a {\em compact} odd-dimensional manifold without boundary vanishes and the compact analogue of \eqref{E:IindBepsilon}  is trivial when $\dim M=\rm{even}$. This is not the case for the index of  Callias-type operators on non-compact manifolds. In fact, the odd-dimensional case is very interesting and is the subject of the  celebrated Callias-type index theorem, \cite{Anghel93Callias,Bunke95}. That is why we don't assume that the dimension of $M$ is odd.

\section{Boundary value problems for manifolds with non-compact boundary}\label{S:index}

In the beginning of this section we briefly recall the notion of strongly Callias-type operator and define the scale of Sobolev spaces defined by such an operator. We then recall the definition of an elliptic boundary value problem for a strongly Callias-type operator \cite{BrShi17} and define its index.  

\subsection{Strongly Callias-type operator}\label{SS:strCallias}
Let $M$ be a complete Riemannian manifold (possibly with boundary) and let $E\to M$ be a Dirac bundle over $M$, cf. \cite[Definition~II.5.2]{LawMic89}. In particular, $E$ is a Hermitian vector bundle endowed with a Clifford multiplication $c:T^*M\to \End(E)$ and a compatible Hermitian connection $\n^E$.  Let $D:C^\infty(M,E)\to C^\infty(M,E)$ be the Dirac operator defined by the connection $\n^E$. Let $\Psi\in{\rm End}(E)$ be a self-adjoint bundle map (called a \emph{Callias potential}). Then
\[
	\DD\;:=\;D\,+\,\Psi
\]
is a formally self-adjoint Dirac-type operator on $E$ and
\begin{equation}\label{E:Calliaseq}
	\DD^2\;=\;D^2+\Psi^2+[D,\Psi]_+,
\end{equation}
where $[D,\Psi]_+:=D\circ\Psi+\Psi\circ D$ is the anticommutator of the operators $D$ and $\Psi$.

\begin{definition}\label{D:saCallias}
We call $\DD$ a \emph{self-adjoint strongly Callias-type operator} if
\begin{enumerate}
\item $[D,\Psi]_+$ is a zeroth order differential operator, i.e. a bundle map;
\item for any $R>0$, there exists a compact subset $K_R\subset M$ such that
\begin{equation}\label{E:strinvinfA}
	\Psi^2(x)\;-\;\big|[D,\Psi]_+(x)\big|\;\ge\;R
\end{equation}
for all $x\in M\setminus K_R$.  In this case, the compact set $K_R$ is called an \emph{R-essential support} of $\DD$.
\end{enumerate}
\end{definition}

\subsection{Restriction to the boundary}\label{SS:restriction}
Assume that the Riemannian metric $g^M$ is {\em product near the boundary}, that is, there exists a neighborhood $U\subset M$ of the boundary which is isometric to the cylinder
\begin{equation}\label{E:Zr}
	Z_r\ := \ [0,r)\times\dm. 
\end{equation} 
In the following we identify $U$ with $Z_r$ and denote by $t$ the coordinate along the axis of $[0,r)$. Then the inward unit normal to the boundary is given by $\tau = dt$. 

Furthermore, we assume that the Dirac bundle $E$ is {\em product near the boundary}. This means that the Clifford multiplication $c:T^*M\to{\rm End}(E)$ and the connection $\nabla^E$ have product structure on $Z_r$, cf. \cite[\S3.7]{BrShi17}. Then the restriction of $D$ to $Z_r$ takes the form 
\begin{equation}\label{E:productD}
	D\; = \;c(\tau)(\p_t+A),
\end{equation}
where
\(
	A:\;C^\infty(\dm,E_{\dm})\;\to\; C^\infty(\dm,E_{\dm})
\)
is a self-adjoint Dirac-type operator which anticommutes with $c(\tau)$:
\begin{equation}\label{E:[A,c]}
	c(\tau)\circ A\ = \ -\,A\circ c(\tau).
\end{equation}

Let $\DD=D+\Psi$ be a self-adjoint strongly Callias-type operator. Suppose $\Psi$ does not depend on $t$ on $Z_r$. Then the restriction of $\DD$ to $Z_r$ is given by 
\begin{equation}\label{E:productDD}
	\DD\; = \;c(\tau)(\p_t+\AAA),
\end{equation}
where
\(
		\AAA \;:= \;A-c(\tau)\Psi:\,C^\infty(\dm,E_{\dm})
		\;\to \;C^\infty(\dm,E_{\dm})
\)
is the \emph{restriction of $\DD$ to the boundary}.

Condition (i) of\/ Definition~\ref{D:saCallias} is equivalent to the condition that $\Psi$ anticommutes with the Clifford multiplication: $\big[c(\xi),\Psi\big]_+ = 0$, for all $\xi\in T^*M$. 
It follows that  $c(\tau)\Psi\in{\rm End}(E_{\dm})$ is a self-adjoint bundle map which anticommutes with $c(\tau)$. Hence, using \eqref{E:[A,c]}, we obtain 
\begin{equation}\label{E:cAAA}
 	c(\tau)\circ \AAA\ = \ -\,\AAA\circ c(\tau).
\end{equation} 
In addition,  $\AAA$ is a strongly Callias-type operator, cf. Lemma~3.12 of \cite{BrShi17}.  In particular, it has discrete spectrum. 

\begin{definition}\label{D:producDD}
We say that a self-adjoint strongly Callias-type operator $\DD$ is {\em product near the boundary} if the Dirac bundle $E$ is  product near the boundary and the restriction of the Callias potential $\Psi$ to $Z_r$ does not depend on $t$. The operator $\AAA$ of \eqref{E:productDD} is called the {\em restriction of\/ $\DD$ to the boundary}. 
\end{definition}

\subsection{The grading on the boundary}\label{SS:grading}

Let $E_{\dm}^\pm\subset E_{\dm}$ denote the span of the eigenvectors of $c(\tau)$ associated with eigenvalue $\pm i$. Then $E_{\dm}= E_{\dm}^+\oplus E_{\dm}^-$. By \eqref{E:cAAA}, with respect to this decomposition 
\begin{equation}\label{E:AAAgrading}
	\AAA\ = \ \begin{pmatrix}
	0&\AAA^-\\
	\AAA^+&0
	\end{pmatrix},
\end{equation}
where $\AAA^\pm:= \AAA|_{E_{\dm}^\pm}$.

\subsection{Sobolev spaces}\label{SS:Sobolev}
We recall the definition of Sobolev spaces $H^s_\AAA(\pM,E_\pM)$ of sections over $\pM$ which depend on the boundary operator $\AAA$, cf. \cite[\S3.13]{BrShi17}.

\begin{definition}\label{D:Sobolev}
Set
\[
	C_\AAA^\infty(\dm,E_{\dm})	\;:=\;
	\Big\{\,\bfu\in C^\infty(\dm,E_{\dm}):\,
	\big\|(\id+\AAA^2)^{s/2}\bfu\big\|_{L^2(\dm,E_\pM)}^2<+\infty\mbox{ for all }s\in\RR\,\Big\}.
\]
For all $s\in\RR$ we define the \emph{Sobolev $H_\AAA^s$-norm} on $C_\AAA^\infty(\dm,E_{\dm})$ by
\begin{equation}\label{E:Sobnorm}
	\|\bfu\|_{H_\AAA^s(\dm,E_\pM)}^2\;:=\;
		\big\|(\id+\AAA^2)^{s/2}\bfu\big\|_{L^2(\dm,E_\pM)}^2.
\end{equation}
The Sobolev space $H_\AAA^s(\dm,E_{\dm})$ is defined to be the completion of $C_\AAA^\infty(\dm,E_{\dm})$ with respect to this norm.
\end{definition}

\subsection{The hybrid Soblev spaces}\label{SS:hybrid}
For $I\subset\RR$, let $P_I^\AAA\;:L^2(\pM,E_\pM)\to L^2(\pM,E_\pM)$  
be the spectral projection onto the span of the eigenvectors of $\AAA$ with eigenvalues in $I$. It's easy to see that $P_I$ extends to a continuous projection on $H^s_\AAA(\pM,E_\pM)$ for all $s\in \RR$. We set 
\[
	H_I^s(\AAA)\;:=\;
	P_I^\AAA(H_\AAA^s(\dm,E_{\dm}))\;\subset\;H_\AAA^s(\dm,E_{\dm}).
\]

\begin{definition}\label{D:hybrid space}
For $a\in\RR$, we define the \emph{hybrid} Sobolev space
\begin{equation}\label{E:checkH}
\begin{aligned}
	\cHH(\AAA)\;&:=\;
	H_{(-\infty,a)}^{1/2}(\AAA)\,\oplus\,H_{[a,\infty)}^{-1/2}(\AAA)\;\subset \;H^{-1/2}_\AAA(\dm,E_{\dm}), \\
	\hHH(\AAA)\;&:=\;
	H_{(-\infty,a)}^{-1/2}(\AAA)\,\oplus\,H_{[a,\infty)}^{1/2}(\AAA)\;\subset \;H^{-1/2}_\AAA(\dm,E_{\dm}) \\
\end{aligned}
\end{equation}
with $\cHH$-norm or $\hHH$-norm
\[
\begin{aligned}
	\|\bfu\|_{\cHH(\AAA)}^2\;&:=\;
	\big\|P_{(-\infty,a)}^\AAA\bfu\big\|_{H_\AAA^{1/2}(\dm,E_\pM)}^2\;+\;
	\big\|P_{[a,\infty)}^\AAA\bfu\big\|_{H_\AAA^{-1/2}(\dm,E_\pM)}^2, \\
	\|\bfu\|_{\hHH(\AAA)}^2\;&:=\;
	\big\|P_{(-\infty,a)}^\AAA\bfu\big\|_{H_\AAA^{-1/2}(\dm,E_\pM)}^2\;+\;
	\big\|P_{[a,\infty)}^\AAA\bfu\big\|_{H_\AAA^{1/2}(\dm,E_\pM)}^2.
\end{aligned}
\]
\end{definition}

The space $\cHH(\AAA)$ or $\hHH(\AAA)$ is independent of the choice of $a$. They are dual to each other. Note from \eqref{E:cAAA} that $c(\tau)$ induces an isomorphism between $\cHH(\AAA)$ and $\hHH(\AAA)$. By Theorem~3.39 of \cite{BrShi17}, the hybrid space $\cHH(\AAA)$ coincides with the space of restriction to the boundary of a section of $E$ which lies in the maximal domain of $\DD$.

\subsection{Elliptic boundary value problems}\label{SS:boundaryvalue}

We are now ready to define elliptic boundary conditions for $\DD$. 

\begin{definition}\label{D:ellipticbc}
An \emph{elliptic boundary condition} for $\DD$ is a closed subspace $B\subset\cHH(\AAA)$ such that both $B$ and its adjoint boundary value space
\begin{equation}\label{E:adbc}
	B^\ad\;:=\;\big\{\,\bfv\in\cHH(\AAA):\,
		\big(\bfu,c(\tau)\bfv\big)=0\mbox{ for all }\bfu\in B\,\big\}
\end{equation}
are subspaces of $H^{1/2}_\AAA(\pM,E_\pM)$.
\end{definition}

We have shown in \cite{BrShi17} that an elliptic boundary value problem $\DD_B$ is Fredholm. Its \emph{index} is defined by
\[
\ind\DD_B\;:=\;\dim\ker\DD_B\,-\,\dim\ker\DD_{B^\ad}\;\in\;\ZZ.
\]

\section{Elliptic boundary conditions and Fredholm pairs}\label{S:ellbc}

In \cite{Shi18}, the second author studied the relationship between Atiyah--Patodi--Singer index and the Cauchy data spaces using the method of Fredholm pairs. In this section, we provide a new description of elliptic boundary conditions from the perspective of Fredholm pairs. The main result is similar in spirit to Definition~7.5 of \cite{BarBallmann12}.

\subsection{Fredholm pair of subspaces}\label{SS:Fredholmpair}

We recall the notion of a Fredholm pair of subspaces. Let $Z$ be a Hilbert space. A pair $(X,Y)$ of closed subspaces of $Z$ is called a \emph{Fredholm pair} if
\begin{enumerate}[label=(\roman*)]
\item $\dim(X\cap Y)<\infty$;
\item $X+Y$ is a closed subspace of $Z$;
\item $\codim(X+Y):=\dim Z/(X+Y)<\infty$.
\end{enumerate}
The \emph{index} of a Fredholm pair $(X,Y)$ is defined to be
\[
\ind(X,Y)\;:=\;\dim(X\cap Y)\,-\,\codim(X+Y)\;\in\;\ZZ.
\]

\subsection{Fredholm pairs associated to an elliptic boundary condition}\label{SS:ellipticpair}

Let $\DD:C^\infty(M,E)\to C^\infty(M,E)$ be a self-adjoint strongly Callias-type operator on a manifold with non-compact boundary. Let $\AAA:C^\infty(\dm,E_{\dm})\to C^\infty(\dm,E_{\dm})$ be the restriction of $\DD$ to $\dm$. Recall that by Definition~\ref{D:ellipticbc} a closed subspace $B$ of $\cHH(\AAA)$ is an elliptic boundary condition if $B\subset H_\AAA^{1/2}(\dm,E_{\dm})$ and $B^\ad\subset H_{\AAA}^{1/2}(\dm,E_{\dm})$. Since the $H_\AAA^{1/2}$-norm is stronger than the $\cHH$-norm, in this case $B$ is also closed in $H_\AAA^{1/2}(\dm,E_{\dm})$. Generally, if $B$ is a closed subspace of $H_\AAA^{1/2}(\dm,E_{\dm})$, then we define
\begin{equation}\label{E:Bstar}
B^*\;:=\;B^0\,\cap\,H_\AAA^{1/2}(\dm,E_{\dm}),
\end{equation}
where $B^0\subset H_\AAA^{-1/2}(\dm,E_{\dm})$ is the annihilator of $B$. The main result of this section is the following equivalent definition of elliptic boundary conditions.

\begin{theorem}\label{T:equiv-ellbc}
A subspace $B\subset H_\AAA^{1/2}(\dm,E_{\dm})$ is an elliptic boundary condition for $\DD$ if and only if
\begin{enumerate}
\item $B$ is closed in $H_\AAA^{1/2}(\dm,E_{\dm})$,
\item $(H_{[0,\infty)}^{1/2}(\AAA),B)$, $(H_{(-\infty,0)}^{1/2}(\AAA),B^*)$ are Fredholm pairs in $H_\AAA^{1/2}(\dm,E_{\dm})$, and
\item $\ind(H_{[0,\infty)}^{1/2}(\AAA),B)=-\ind(H_{(-\infty,0)}^{1/2}(\AAA),B^*)$.
\end{enumerate}
If $B$ satisfies (1), (2) and (3), then $B^\ad=c(\tau)B^*$.
\end{theorem}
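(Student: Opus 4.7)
Before either implication, I record two preliminary facts that will be used repeatedly. Since $c(\tau)$ anticommutes with $\AAA$, it commutes with $\AAA^2$ and therefore restricts to an isometric isomorphism of $H_\AAA^s(\pM, E_\pM)$ for every $s \in \RR$, swapping the spectral subspaces $H_{[0,\infty)}^s(\AAA)$ and $H_{(-\infty, 0]}^s(\AAA)$. Moreover, whenever both sides lie in $H_\AAA^{1/2}(\pM,E_\pM)$ one has $B^\ad = c(\tau) B^*$: for $v \in H_\AAA^{1/2}$ the defining condition $(u, c(\tau) v) = 0$ for all $u \in B$ is precisely $c(\tau) v \in B^0$, which combined with $c(\tau) v \in H_\AAA^{1/2}$ reads $c(\tau) v \in B^*$.

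For the $(\Rightarrow)$ direction, assume $B$ is elliptic. Condition (1) is immediate because the $H^{1/2}$-topology is finer than the $\cHH(\AAA)$-topology in which $B$ is closed. For (2), analyze the bounded spectral-projection map
\[
\pi : B \to H_{(-\infty, 0)}^{1/2}(\AAA), \qquad u \mapsto P_{(-\infty,0)}^\AAA u;
\]
its kernel is $B \cap H_{[0,\infty)}^{1/2}(\AAA)$ and its cokernel is naturally isomorphic to $H_\AAA^{1/2}/(B + H_{[0,\infty)}^{1/2}(\AAA))$, so $\pi$ is Fredholm iff the first pair in (2) is, with equal indices. Using the non-degenerate $L^2$ pairing $H_\AAA^{1/2} \times H_\AAA^{-1/2} \to \CC$ together with the preliminary identity $B^\ad = c(\tau) B^*$, the transpose of $\pi$ identifies with the analogous spectral-projection map for $B^*$, so the Fredholm property propagates to the second pair and the two indices cancel, giving (2) and (3) simultaneously.

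For the $(\Leftarrow)$ direction, assume (1)-(3). Condition (2) gives a graph decomposition: $F_B := B \cap H_{[0,\infty)}^{1/2}(\AAA)$ is finite-dimensional, $V_B := P_{(-\infty,0)}^\AAA(B)$ is closed of finite codimension in $H_{(-\infty,0)}^{1/2}(\AAA)$, and there is a bounded linear graph operator $g_B : V_B \to H_{[0,\infty)}^{1/2}(\AAA)/F_B$ with $B = F_B \oplus \{v + g_B(v) : v \in V_B\}$. To see $B$ is closed in $\cHH(\AAA)$, take $u_n \in B$ with $u_n \to u$ in $\cHH$; the $\cHH$- and $H^{1/2}$-norms coincide on $H_{(-\infty, 0)}^{1/2}(\AAA)$, so $P_{(-\infty,0)}^\AAA u_n$ converges in $H^{1/2}$, and boundedness of $g_B$ then forces $P_{[0,\infty)}^\AAA u_n$ to converge in $H^{1/2}$ too, yielding $u \in H_\AAA^{1/2}$ and $u \in B$ by (1). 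To conclude $B^\ad \subset H_\AAA^{1/2}$ (whence $B^\ad = c(\tau) B^*$ by the preliminary identity), dualize $g_B$ to obtain an analogous graph description of $B^0$ inside $H_\AAA^{-1/2}$; condition (3) is precisely the bookkeeping that forces the adjoint graph operator to preserve $H^{1/2}$-regularity, so $B^* \subset H_\AAA^{1/2}$, and pulling back by $c(\tau)$ gives $B^\ad \subset H_\AAA^{1/2}$.

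The main obstacle is the regularity step in the $(\Leftarrow)$ direction: a priori, $B^0 \subset H_\AAA^{-1/2}$ could have elements whose components lie in $H_\AAA^{-1/2} \setminus H_\AAA^{1/2}$, contributing to $\codim(B + H_{[0,\infty)}^{1/2}(\AAA))$ but not to $B^*$, which would break both $B^\ad \subset H_\AAA^{1/2}$ and the identity $B^\ad = c(\tau) B^*$. The index balance (3) is exactly what forces these defect dimensions to cancel, so that the adjoint graph operator $g_B^*$ genuinely maps $H^{1/2}$ to $H^{1/2}$; pinning this cancellation down rigorously is the heart of the proof.
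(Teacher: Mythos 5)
Your outline reproduces the general shape of the argument (graph decompositions, duality, condition (3) as the mechanism forcing regularity of the annihilator), but both directions have genuine gaps at exactly the points where the paper has to work.

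In the ``only if'' direction you never prove that the pair $(H_{[0,\infty)}^{1/2}(\AAA),B)$ is Fredholm: you record the standard equivalence ``$\pi$ Fredholm $\Leftrightarrow$ the pair is Fredholm'' and then say the Fredholm property ``propagates'' to $B^*$ by transposition, but there is nothing to propagate from. Fredholmness here is not a formal consequence of $B$ being closed and contained in $H_\AAA^{1/2}(\dm,E_{\dm})$; it rests on an analytic input, namely that $P_{[0,\infty)}^{\AAA}\colon B\to H_{[0,\infty)}^{-1/2}(\AAA)$ is \emph{compact} (the paper's citation of \cite[Theorem 3.19]{BrShi17}, which uses the discreteness of the spectrum of the strongly Callias-type operator $\AAA$), combined with \cite[Proposition A.3]{BarBallmann12} to get finite kernel and closed range of $\Pi_<|_B$, and with the hypothesis $B^\ad\subset H_\AAA^{1/2}(\dm,E_{\dm})$ to identify the annihilator of $H_{[0,\infty)}^{-1/2}(\AAA)+B$ with $H_{(0,\infty)}^{1/2}(\AAA)\cap B^\ad$ and thus bound the codimension. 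None of this appears in your sketch. Moreover, the transpose of $\pi$ naturally lives on $H^{-1/2}$-spaces and produces a statement about the pair $(H_{(-\infty,0)}^{-1/2}(\AAA),B^0)$, not about $(H_{(-\infty,0)}^{1/2}(\AAA),B^*)$; passing from $B^0$ to $B^*=B^0\cap H_\AAA^{1/2}(\dm,E_{\dm})$ and matching indices across the $H^{\pm1/2}$ and hybrid scales is precisely what the paper's Lemmas \ref{L:Fredpair-check} and \ref{L:Fredpair-1/2} do, and you pass over it silently.

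In the ``if'' direction the closedness of $B$ in $\cHH(\AAA)$ is fine, but the decisive step is missing and your attempted shortcut is circular. What must be shown is that every element of $B^\ad$, a priori only in $\cHH(\AAA)$, has $H^{1/2}$-regularity; equivalently that $\hat{B}^0:=B^0\cap\hHH(\AAA)$ equals $B^*$. Your argument ``condition (3) forces $B^*\subset H_\AAA^{1/2}$, and pulling back by $c(\tau)$ gives $B^\ad\subset H_\AAA^{1/2}$'' does not do this: $B^*\subset H_\AAA^{1/2}(\dm,E_{\dm})$ holds by definition, and the identity $B^\ad=c(\tau)B^*$ was established in your preliminaries only under the assumption $B^\ad\subset H_\AAA^{1/2}(\dm,E_{\dm})$ --- the very thing to be proved; unconditionally one only has $B^\ad=c(\tau)\hat{B}^0$. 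The paper closes this gap by computing $\ind(H_{(-\infty,0)}^{-1/2}(\AAA),\hat{B}^0)=-\ind(H_{[0,\infty)}^{1/2}(\AAA),B)$ and $\ind(H_{(-\infty,0)}^{-1/2}(\AAA),B^*)=\ind(H_{(-\infty,0)}^{1/2}(\AAA),B^*)$, invoking hypothesis (3) to equate them, and then applying \cite[Lemma 4.1]{Shi18} to the nested pair $B^*\subset\hat{B}^0$ to conclude $B^*=\hat{B}^0$. Your last paragraph candidly labels this cancellation ``the heart of the proof'' without supplying it, so as written the proposal establishes neither implication in full.
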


A very typical elliptic boundary condition is the (generalized) Atiyah--Patodi--Singer boundary condition $B=H_{(-\infty,a)}^{1/2}(\AAA)$ for some $a\in\RR$. In this case $B^*=H_{[a,\infty)}^{1/2}(\AAA)$. One immediately sees that both $(H_{[0,\infty)}^{1/2}(\AAA),H_{(-\infty,a)}^{1/2}(\AAA))$ and $(H_{(-\infty,0)}^{1/2}(\AAA),H_{[a,\infty)}^{1/2}(\AAA))$ are Fredholm pairs in $H_\AAA^{1/2}(\dm,E_{\dm})$, and
\[
\ind(H_{[0,\infty)}^{1/2}(\AAA),H_{(-\infty,a)}^{1/2}(\AAA))=-\dim L_{[a,0)}^2(\AAA)=-\ind(H_{(-\infty,0)}^{1/2}(\AAA),H_{[a,\infty)}^{1/2}(\AAA))\quad\mbox{for }a<0
\]
or
\[
\ind(H_{[0,\infty)}^{1/2}(\AAA),H_{(-\infty,a)}^{1/2}(\AAA))=\dim L_{[0,a)}^2(\AAA)=-\ind(H_{(-\infty,0)}^{1/2}(\AAA),H_{[a,\infty)}^{1/2}(\AAA))\quad\mbox{for }a\ge0.
\]

We break the proof of Theorem~\ref{T:equiv-ellbc} into several steps which occupy the next two subsections.

\subsection{Proof of the ``if'' direction}\label{SS:if}

We apply the arguments of \cite[Subsection 3.3]{Shi18}. Suppose $(H_{[0,\infty)}^{1/2}(\AAA),B)$ is a Fredholm pair in $H_\AAA^{1/2}(\dm,E_{\dm})$. Write $B$ in the following direct sum of the  pair of transversal subspaces
\[
B\;=\;(H_{[0,\infty)}^{1/2}(\AAA)\cap B)\;\dot{+}\;V,
\]
where $V$ is some closed subspace of $H_\AAA^{1/2}(\dm,E_{\dm})$. Let $\pi_<$ (resp. $\pi_\ge$) be the projection of $V$ onto $H_{(-\infty,0)}^{1/2}(\AAA)$ (resp. $H_{[0,\infty)}^{1/2}(\AAA)$) along $H_{[0,\infty)}^{1/2}(\AAA)$ (resp. $H_{(-\infty,0)}^{1/2}(\AAA)$). Then $\pi_<$ is injective and
\[
\range\pi_<\;=\;(H_{[0,\infty)}^{1/2}(\AAA)+B)\,\cap\,H_{(-\infty,0)}^{1/2}(\AAA)
\]
is closed (in both $H_\AAA^{1/2}(\dm,E_{\dm})$ and $\cHH(\AAA)$). By closed graph theorem, $\pi_<$ has a bounded inverse $\iota_<:\range\pi_<\to V$. One then has a bounded operator $\phi:=\pi_\ge\circ\iota_<:\range\pi_<\to\range\pi_\ge$. Then $V= {\rm graph}(\phi)$, and, hence,
\[
B\;=\;(H_{[0,\infty)}^{1/2}(\AAA)\cap B)\;\dot{+}\;{\rm graph}(\phi).
\]
Let $\check{\phi}$ be the composition
\[
\range\pi_<\;\xrightarrow{\phi}\;\range\pi_\ge\;\hookrightarrow\;H_{[0,\infty)}^{-1/2}(\AAA).
\]
Viewed as a map from a closed subspace of $\cHH(\AAA)$ to $\cHH(\AAA)$, $\check{\phi}$ is a bounded operator. Note that $B$ can also be written as
\begin{equation}\label{E:Bdecomp-check}
	B\;=\;
	(H_{[0,\infty)}^{1/2}(\AAA)\cap B)
	\;\dot{+}\;
	{\rm graph}(\check{\phi})\;\subset\;\cHH(\AAA).
\end{equation}
Since the first summand is finite-dimensional, $B$ is closed in $\cHH(\AAA)$.

It now remains to show that $B^\ad\subset H_\AAA^{1/2}(\dm,E_{\dm})$. We need the following lemma.

\begin{lemma}\label{L:Fredpair-check}
$(H_{[0,\infty)}^{-1/2}(\AAA),B)$ is a Fredholm pair in $\cHH(\AAA)$ and
\[
\ind(H_{[0,\infty)}^{-1/2}(\AAA),B)\;=\;\ind(H_{[0,\infty)}^{1/2}(\AAA),B).
\]
\end{lemma}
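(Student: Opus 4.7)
The strategy is to leverage the decomposition \eqref{E:Bdecomp-check} of $B$ already obtained just above, together with the topological direct sum decomposition
\[
	\cHH(\AAA)\;=\;H_{(-\infty,0)}^{1/2}(\AAA)\,\oplus\,H_{[0,\infty)}^{-1/2}(\AAA)
\]
(taking $a=0$ in Definition~\ref{D:hybrid space}). I will compute the intersection and the sum of $H_{[0,\infty)}^{-1/2}(\AAA)$ and $B$ inside $\cHH(\AAA)$ using this decomposition, and match them with the corresponding quantities for the pair $(H_{[0,\infty)}^{1/2}(\AAA),B)$ in $H_\AAA^{1/2}(\pM,E_\pM)$.

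First, I would write an arbitrary element of $B$ in the form $u+v+\check\phi(v)$ with $u\in H_{[0,\infty)}^{1/2}(\AAA)\cap B$ and $v\in\range\pi_<\subset H_{(-\infty,0)}^{1/2}(\AAA)$. Since the first and third summands lie in $H_{[0,\infty)}^{-1/2}(\AAA)$ while $v$ lies in the transversal summand $H_{(-\infty,0)}^{1/2}(\AAA)$, membership of such an element in $H_{[0,\infty)}^{-1/2}(\AAA)$ forces $v=0$. Hence
\[
	H_{[0,\infty)}^{-1/2}(\AAA)\,\cap\,B\;=\;H_{[0,\infty)}^{1/2}(\AAA)\,\cap\,B,
\]
which in particular is finite-dimensional by the Fredholm hypothesis in $H_\AAA^{1/2}$.

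Next, for the sum, the summand $u\in H_{[0,\infty)}^{1/2}(\AAA)\cap B$ and $\check\phi(v)\in H_{[0,\infty)}^{-1/2}(\AAA)$ may be absorbed into the $H_{[0,\infty)}^{-1/2}(\AAA)$ factor, leaving
\[
	H_{[0,\infty)}^{-1/2}(\AAA)\,+\,B\;=\;H_{[0,\infty)}^{-1/2}(\AAA)\,+\,\range\pi_<.
\]
Because $\range\pi_<$ is a closed subspace of $H_{(-\infty,0)}^{1/2}(\AAA)$ and the decomposition of $\cHH(\AAA)$ is topological, the right-hand side is closed in $\cHH(\AAA)$ and its codimension there equals the codimension of $\range\pi_<$ in $H_{(-\infty,0)}^{1/2}(\AAA)$. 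An identical computation using the decomposition $H_\AAA^{1/2}=H_{(-\infty,0)}^{1/2}(\AAA)\oplus H_{[0,\infty)}^{1/2}(\AAA)$ shows that $H_{[0,\infty)}^{1/2}(\AAA)+B$ is closed in $H_\AAA^{1/2}$ with the same codimension. Combined with the equal intersections, this finishes both the Fredholm assertion and the equality of indices.

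\textbf{Main obstacle.} Nothing here is technically deep; the one subtlety is to be careful that the ``absorption'' argument in the sum is genuinely bijective modulo $H_{[0,\infty)}^{-1/2}(\AAA)$, so that the codimension really reduces to that of $\range\pi_<$ inside $H_{(-\infty,0)}^{1/2}(\AAA)$, and that the closedness of $\range\pi_<$ (already established in the preceding paragraph via the closed graph theorem) carries over to closedness in $\cHH(\AAA)$. Once the direct sum $\cHH(\AAA)=H_{(-\infty,0)}^{1/2}(\AAA)\oplus H_{[0,\infty)}^{-1/2}(\AAA)$ is used to project the sums onto the $H_{(-\infty,0)}^{1/2}$-component, both claims follow immediately.
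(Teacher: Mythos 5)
Your argument is correct, and it proves exactly what is needed. The route differs from the paper's in execution though not in underlying mechanism: the paper considers the projection $\Pi_<$ of $\cHH(\AAA)$ onto $H_{(-\infty,0)}^{1/2}(\AAA)$ along $H_{[0,\infty)}^{-1/2}(\AAA)$, observes that its restriction to $H_\AAA^{1/2}(\dm,E_{\dm})$ is the projection onto $H_{(-\infty,0)}^{1/2}(\AAA)$ along $H_{[0,\infty)}^{1/2}(\AAA)$, and then invokes \cite[Proposition 3.5]{Shi18}, which identifies the index of a Fredholm pair with the index of this restricted projection $\Pi_<|_B$; since $\Pi_<|_B$ is literally the same operator whether $B$ is viewed inside $H_\AAA^{1/2}$ or inside $\cHH(\AAA)$, Fredholmness and equality of the two indices follow at once. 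You instead verify the Fredholm-pair axioms directly, using the graph decomposition \eqref{E:Bdecomp-check} together with the topological splitting $\cHH(\AAA)=H_{(-\infty,0)}^{1/2}(\AAA)\oplus H_{[0,\infty)}^{-1/2}(\AAA)$, showing that the intersection is unchanged and that both sums have quotient $H_{(-\infty,0)}^{1/2}(\AAA)/\range\pi_<$. This is more self-contained (it does not need the criterion from \cite{Shi18}, only the decomposition already established and the closedness of $\range\pi_<$ and of $B$ in $\cHH(\AAA)$ from the preceding paragraph), at the price of redoing by hand what that criterion packages; the paper's phrasing has the advantage that the same criterion is reused immediately afterwards (for $\hat{B}^0$ via \eqref{E:Bhat0=-B} and in Lemma~\ref{L:Fredpair-1/2}). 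One minor correction of attribution: the closedness of $\range\pi_<$ comes from condition (ii) of the Fredholm-pair hypothesis (closedness of $H_{[0,\infty)}^{1/2}(\AAA)+B$), the closed graph theorem being used only to obtain the bounded inverse $\iota_<$; this does not affect your argument.
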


\begin{proof}
Let $\Pi_<$ denote the projection from $\cHH(\AAA)$ onto $H_{(-\infty,0)}^{1/2}(\AAA)$ along $H_{[0,\infty)}^{1/2}(\AAA)$. Then $\Pi_<$ is an orthogonal with respect to the scalar product on $\cHH(\AAA)$. Its restriction to $H_\AAA^{1/2}(\dm,E_{\dm})$ is also the orthogonal projection (with respect to the scalar product on $H_\AAA^{1/2}(\dm,E_{\dm})$) from $H_\AAA^{1/2}(\dm,E_{\dm})$ onto $H_{(-\infty,0)}^{1/2}(\AAA)$. Since $(H_{[0,\infty)}^{1/2}(\AAA),B)$ is a Fredholm pair in $H_\AAA^{1/2}(\dm,E_{\dm})$, by \cite[Proposition 3.5]{Shi18}, $\Pi_<|_B:B\to H_{(-\infty,0)}^{1/2}(\AAA)$ is a Fredholm operator and
\[
\ind(H_{[0,\infty)}^{1/2}(\AAA),B)\;=\;\ind\Pi_<|_B.
\]
Note that $B$ is also closed in $\cHH(\AAA)$. It then follows that $(H_{[0,\infty)}^{-1/2}(\AAA),B)$ is a Fredholm pair in $\cHH(\AAA)$ and
\[
\ind(H_{[0,\infty)}^{-1/2}(\AAA),B)\;=\;\ind\Pi_<|_B\;=\;\ind(H_{[0,\infty)}^{1/2}(\AAA),B).
\]
\end{proof}

Now we use the hypothesis that $(H_{(-\infty,0)}^{1/2}(\AAA),B^*)$ is a Fredholm pair in $H_\AAA^{1/2}(\dm,E_{\dm})$. 
Applying the above discussions similarly, one concludes that $B^*$ is a closed subspace of $\hHH(\AAA)$. Moreover, $(H_{(-\infty,0)}^{-1/2}(\AAA),B^*)$ is a Fredholm pair in $\hHH(\AAA)$ satisfying
\begin{equation}\label{E:B*=B*}
\ind(H_{(-\infty,0)}^{-1/2}(\AAA),B^*)\;=\;\ind(H_{(-\infty,0)}^{1/2}(\AAA),B^*).
\end{equation}
Recall that $B$ is closed in $\cHH(\AAA)$. We denote its annihilator as a subspace of $\cHH(\AAA)$ to be
\begin{equation}\label{E:Bhat0}
	\hat{B}^0\;:=\;B^0\,\cap\,\hHH(\AAA).
\end{equation}
Clearly, $B^*\subset\hat{B}^0$. From Lemma \ref{L:Fredpair-check} and \cite[Proposition 3.5]{Shi18}, we obtain that $(H_{(-\infty,0)}^{-1/2}(\AAA),\hat{B}^0)$ is a Fredholm pair in $\hHH(\AAA)$ and
\begin{equation}\label{E:Bhat0=-B}
\ind(H_{(-\infty,0)}^{-1/2}(\AAA),\hat{B}^0)\;=\;-\ind(H_{[0,\infty)}^{-1/2}(\AAA),B)\;=\;-\ind(H_{[0,\infty)}^{1/2}(\AAA),B).
\end{equation}
Combining \eqref{E:B*=B*}, \eqref{E:Bhat0=-B} and Theorem \ref{T:equiv-ellbc}.(3) yields that
\[
\ind(H_{(-\infty,0)}^{-1/2}(\AAA),B^*)\;=\;\ind(H_{(-\infty,0)}^{-1/2}(\AAA),\hat{B}^0).
\]
Now using \cite[Lemma 4.1]{Shi18}, we finally obtain $B^*=\hat{B}^0$. Since $B^\ad=c(\tau)\hat{B}^0$, one deduces that $B^\ad\subset H_\AAA^{1/2}(\dm,E_{\dm})$. Therefore $B$ is an elliptic boundary condition for $\DD$.

\subsection{Proof of the ``only if'' direction}\label{SS:onlyif}

Let $B\subset\cHH(\AAA)$ be an elliptic boundary condition. Then condition (1) of Theorem \ref{T:equiv-ellbc} is automatically true and the $\cHH$-norm is equivalent to the $H_\AAA^{1/2}$-norm on $B$. Let $\Pi_<$ (resp. $\Pi_\ge$) denote the orthogonal projection from $\cHH(\AAA)$ onto $H_{(-\infty,0)}^{1/2}(\AAA)$ (resp. $H_{[0,\infty)}^{-1/2}(\AAA)$). For any $\bfu\in B$, there exists a constant $C>0$ such that
\[
\|\bfu\|_{H_\AAA^{1/2}}\;\le\;C\|\bfu\|_{\cHH(\AAA)}\;=\;C\big(\|\Pi_<\bfu\|_{H_\AAA^{1/2}}+\|\Pi_\ge\bfu\|_{H_\AAA^{-1/2}}\big).
\]
By \cite[Theorem 3.19]{BrShi17}, the map
\[
\Pi_\ge\;:\;B\;\subset\;H_\AAA^{1/2}(\dm,E_{\dm})\;\to\;H_{[0,\infty)}^{1/2}(\AAA)\;\hookrightarrow\;H_{[0,\infty)}^{-1/2}(\AAA)
\]
is compact. Using \cite[Proposition A.3]{BarBallmann12}, one concludes that $\Pi_<:B\to H_{(-\infty,0)}^{1/2}(\AAA)$ has finite-dimensional kernel and closed image. In other words, $H_{[0,\infty)}^{1/2}(\AAA)\cap B$ is finite-dimensional and $H_{[0,\infty)}^{1/2}(\AAA)+B$ (resp. $H_{[0,\infty)}^{-1/2}(\AAA)+B$) is a closed subspace of $H_\AAA^{1/2}(\dm,E_{\dm})$ (resp. $\cHH(\AAA)$). By the fact that $B$ is an elliptic boundary condition,
\[
B^\ad\;=\;c(\tau)\hat{B}^0\;\subset\;H_\AAA^{1/2}(\dm,E_{\dm}),
\]
where $\hat{B}^0$ is defined in \eqref{E:Bhat0}. Viewed as a subspace of $\cHH(\AAA)$, the annihilator of $H_{[0,\infty)}^{-1/2}(\AAA)+B$ is
\[
H_{(-\infty,0)}^{-1/2}(\AAA)\cap\hat{B}^0\;\cong\;H_{(0,\infty)}^{-1/2}(\AAA)\cap B^\ad\;=\;H_{(0,\infty)}^{1/2}(\AAA)\cap B^\ad.
\]
By the same reason as above,
\[
+\infty\;>\;\dim(H_{(0,\infty)}^{1/2}(\AAA)\cap B^\ad)\;=\;\codim(H_{[0,\infty)}^{-1/2}(\AAA)+B).
\]
Therefore $(H_{[0,\infty)}^{-1/2}(\AAA),B)$ is a Fredholm pair in $\cHH(\AAA)$.

Using the same argument as in the proof of Lemma \ref{L:Fredpair-check}, we can prove the following 
\begin{lemma}\label{L:Fredpair-1/2}
$(H_{[0,\infty)}^{1/2}(\AAA),B)$ is a Fredholm pair in $H_\AAA^{1/2}(\dm,E_{\dm})$ and
\begin{equation}\label{E:1/2=check}
\ind(H_{[0,\infty)}^{1/2}(\AAA),B)\;=\;\ind(H_{[0,\infty)}^{-1/2}(\AAA),B).
\end{equation}
\end{lemma}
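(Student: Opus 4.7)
The plan is to mirror the proof of Lemma~\ref{L:Fredpair-check}, which went from the $H_\AAA^{1/2}$-side to the $\cHH$-side; here we need the reverse implication. The preceding ``only if'' discussion has already given us two crucial inputs: $(H_{[0,\infty)}^{-1/2}(\AAA),B)$ is a Fredholm pair in $\cHH(\AAA)$, and the $\cHH(\AAA)$-norm and $H_\AAA^{1/2}$-norm are equivalent on $B$. So the task reduces to transferring Fredholmness back across these two topologies with matched indices.

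The key device will again be the projection $\Pi_<$ onto $H_{(-\infty,0)}^{1/2}(\AAA)$ coming from the orthogonal decomposition $\cHH(\AAA)=H_{(-\infty,0)}^{1/2}(\AAA)\oplus H_{[0,\infty)}^{-1/2}(\AAA)$; this is orthogonal for the $\cHH$-inner product, and its restriction to $H_\AAA^{1/2}(\dm,E_{\dm})$ coincides with the $H_\AAA^{1/2}$-orthogonal projection onto $H_{(-\infty,0)}^{1/2}(\AAA)$ along $H_{[0,\infty)}^{1/2}(\AAA)$. Applying \cite[Proposition~3.5]{Shi18} in $\cHH(\AAA)$, the Fredholmness of the pair $(H_{[0,\infty)}^{-1/2}(\AAA),B)$ is equivalent to $\Pi_<|_B$ being Fredholm, with equal index. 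Because $B\subset H_\AAA^{1/2}(\dm,E_{\dm})$ and the two norms are equivalent on $B$, the same operator, now viewed from $B$ with the $H_\AAA^{1/2}$-topology, is still Fredholm with the same kernel, range, and index. A second application of \cite[Proposition~3.5]{Shi18}, this time inside $H_\AAA^{1/2}(\dm,E_{\dm})$, then yields that $(H_{[0,\infty)}^{1/2}(\AAA),B)$ is a Fredholm pair there, and \eqref{E:1/2=check} drops out by concatenating the two index identifications through $\ind\Pi_<|_B$.

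The only subtlety, and what I would be careful to spell out, is that \cite[Proposition~3.5]{Shi18} is being used genuinely as a two-way equivalence and that the transition between ambient Hilbert spaces $\cHH(\AAA)$ and $H_\AAA^{1/2}(\dm,E_{\dm})$ is legitimate. The equivalence-of-norms fact (already recorded in the ``only if'' proof) is exactly what makes this transfer painless: it identifies $\Pi_<|_B$ in the two ambient settings as literally the same bounded linear map, so Fredholmness, kernel, range, and index carry across without any extra analytic input. In other words, no deeper ingredient is required beyond what Lemma~\ref{L:Fredpair-check} already used.
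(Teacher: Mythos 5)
Your proposal is correct and matches the paper's intent exactly: the paper gives no separate argument for this lemma, saying only that it follows ``using the same argument as in the proof of Lemma~\ref{L:Fredpair-check}'', and that argument is precisely the transfer through $\ind\Pi_<|_B$ via \cite[Proposition~3.5]{Shi18} that you reconstruct, run in the reverse direction using the equivalence of the $\cHH$- and $H_\AAA^{1/2}$-norms on $B$. Your explicit remark that $\Pi_<|_B$ is literally the same bounded map in both ambient settings (same codomain norm on $H_{(-\infty,0)}^{1/2}(\AAA)$, equivalent norms on $B$) is exactly the point the paper leaves implicit.
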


Since $B$ is an elliptic boundary condition, from the discussion above, $\hat{B}^0\subset H_\AAA^{1/2}(\dm,E_{\dm})$. Thus
\[
B^*\;=\;\hat{B}^0\,\cap\,H_\AAA^{1/2}(\dm,E_{\dm})\;=\;\hat{B}^0.
\]
By the fact that $(H_{[0,\infty)}^{-1/2}(\AAA),B)$ is a Fredholm pair in $\cHH(\AAA)$, one concludes that $(H_{(-\infty,0)}^{-1/2}(\AAA),B^*)$ is a Fredholm pair in $\hHH(\AAA)$. It then follows from Lemma \ref{L:Fredpair-1/2} that $(H_{(-\infty,0)}^{1/2}(\AAA),B^*)$ is a Fredholm pair in $H_\AAA^{1/2}(\dm,E_{\dm})$ and
\[
\ind(H_{(-\infty,0)}^{1/2}(\AAA),B^*)\;=\;\ind(H_{(-\infty,0)}^{-1/2}(\AAA),B^*)\;=\;-\ind(H_{[0,\infty)}^{-1/2}(\AAA),B)\;=\;-\ind(H_{[0,\infty)}^{1/2}(\AAA),B),
\]
so conditions (2) and (3) of Theorem \ref{T:equiv-ellbc} are verified. We thus complete the proof of Theorem \ref{T:equiv-ellbc}. \hfill$\square$

\subsection{An index formula regarding Fredholm pairs}\label{SS:indfor-Fredpair}

Let $B_1$ and $B_2$ be two elliptic boundary conditions for $\DD$. Then $\DD_{B_1}$ and $\DD_{B_2}$ are Fredholm operators. By Theorem \ref{T:equiv-ellbc}, $(H_{[0,\infty)}^{1/2}(\AAA),B_1)$ and $(H_{[0,\infty)}^{1/2}(\AAA),B_2)$ are Fredholm pairs in $H_\AAA^{1/2}(\dm,E_{\dm})$. The following theorem, which generalizes \cite[Proposition 5.8]{BrShi17} and can be compared with \cite[Theorem 8.15]{BarBallmann12} (where the boundary is compact), computes the difference of the two elliptic boundary value problems in terms of the indexes of Fredholm pairs.

\begin{theorem}\label{T:indfor-Fredpair}
Let $B_1,B_2\subset H_\AAA^{1/2}(\dm,E_{\dm})$ be elliptic boundary conditions for $\DD$. Then
\begin{equation}\label{E:indfor-Fredpair-1}
\ind\DD_{B_1}\;-\;\ind\DD_{B_2}\;=\;\ind(H_{[0,\infty)}^{1/2}(\AAA),B_1)\;-\;\ind(H_{[0,\infty)}^{1/2}(\AAA),B_2).
\end{equation}
Let $B_2^\perp$ be the orthogonal complement of $B_2$ in $H_\AAA^{1/2}(\dm,E_{\dm})$. If $(B_2^\perp,B_1)$ is a Fredholm pair in $H_\AAA^{1/2}(\dm,E_{\dm})$, then
\begin{equation}\label{E:indfor-Fredpair-2}
\ind\DD_{B_1}\;-\;\ind\DD_{B_2}\;=\;\ind(B_2^\perp,B_1).
\end{equation}
\end{theorem}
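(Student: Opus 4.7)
The plan is to prove the two formulas in order, with the second following from the first combined with a functional-analytic identity about Fredholm pairs.

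For \eqref{E:indfor-Fredpair-1}, I would reduce to the Atiyah--Patodi--Singer boundary condition as a reference. Set $B_\aps := H_{(-\infty,0)}^{1/2}(\AAA)$; by the discussion following Theorem~\ref{T:equiv-ellbc}, this is an elliptic boundary condition for $\DD$, and $(H_{[0,\infty)}^{1/2}(\AAA), B_\aps)$ is a Fredholm pair of index $0$ since the two spaces are complementary in $H_\AAA^{1/2}(\dm,E_{\dm})$. Since the theorem advertises itself as a generalization of \cite[Proposition~5.8]{BrShi17}, one expects the latter to handle precisely the special case $B_2=B_\aps$. Applying this special case twice gives
\[
\ind\DD_{B_i}\,-\,\ind\DD_{B_\aps}\;=\;\ind(H_{[0,\infty)}^{1/2}(\AAA),B_i),\qquad i=1,2,
\]
and subtracting these two relations yields \eqref{E:indfor-Fredpair-1}.

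For \eqref{E:indfor-Fredpair-2}, in view of \eqref{E:indfor-Fredpair-1} it suffices to establish the Hilbert-space identity
\[
\ind(H_{[0,\infty)}^{1/2}(\AAA),B_1)\,-\,\ind(H_{[0,\infty)}^{1/2}(\AAA),B_2)\;=\;\ind(B_2^\perp,B_1)
\]
whenever $(B_2^\perp,B_1)$ is a Fredholm pair. By \cite[Proposition~3.5]{Shi18}, each Fredholm pair index can be rewritten as the index of an orthogonal projection: with $\Pi_<$ and $\Pi_{B_2}$ denoting the orthogonal projections in $H_\AAA^{1/2}(\dm,E_{\dm})$ onto $H_{(-\infty,0)}^{1/2}(\AAA)$ and $B_2$ respectively, the identity becomes
\[
\ind(\Pi_<|_{B_1})\,-\,\ind(\Pi_<|_{B_2})\;=\;\ind(\Pi_{B_2}|_{B_1}).
\]
By the composition rule for Fredholm operators applied to $\Pi_<|_{B_2}\circ\Pi_{B_2}|_{B_1}:B_1\to H_{(-\infty,0)}^{1/2}(\AAA)$, the desired equality reduces to showing that this composition has the same Fredholm index as $\Pi_<|_{B_1}$.

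The principal obstacle lies in this last reduction: the two maps differ by $\Pi_<\circ\Pi_{B_2^\perp}|_{B_1}$, and one must show that this discrepancy does not change the Fredholm index. I would attempt two strategies. First, identify the difference as a compact perturbation by exploiting the compactness of the map $B\hookrightarrow H_\AAA^{1/2}\hookrightarrow H_\AAA^{-1/2}$ combined with appropriate spectral projections (as in \cite[Theorem~3.19]{BrShi17}, which was already used in Subsection~\ref{SS:onlyif}), using the ellipticity of both $B_1$ and $B_2$. Alternatively, construct the linear homotopy $T_s:=\Pi_<(sI+(1-s)\Pi_{B_2})|_{B_1}$ for $s\in[0,1]$, connecting $T_1=\Pi_<|_{B_1}$ to $T_0=\Pi_<\Pi_{B_2}|_{B_1}$, and verify that $T_s$ remains Fredholm for every $s$, so that its index is constant in $s$ and hence equal at the two endpoints. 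Once either strategy is carried through, the required identity (and hence \eqref{E:indfor-Fredpair-2}) follows. The last statement $B^\ad=c(\tau)B^*$ under conditions (1)--(3) has already been obtained in the proof of the ``if'' direction of Theorem~\ref{T:equiv-ellbc}, so no further argument is needed there.
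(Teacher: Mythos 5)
Your proof of \eqref{E:indfor-Fredpair-2} has a genuine gap, and it occurs exactly at the step you flag as the ``principal obstacle.'' You reduce \eqref{E:indfor-Fredpair-2} to the purely functional-analytic claim that $\ind(H_{[0,\infty)}^{1/2}(\AAA),B_1)-\ind(H_{[0,\infty)}^{1/2}(\AAA),B_2)=\ind(B_2^\perp,B_1)$ whenever the three pairs are Fredholm, i.e.\ that $\Pi_<|_{B_1}$ and $\Pi_<\Pi_{B_2}|_{B_1}$ have the same index. This is false at that level of generality, so neither of your two strategies can be carried out without using much more than the Fredholm-pair structure. Concretely, in $Z=X\oplus X^\perp$ with $X\cong X^\perp\cong\ell^2\oplus\ell^2$, take $B_1={\rm graph}(T)$ and $B_2={\rm graph}(S)$ for bounded $T,S:X\to X^\perp$; one checks directly that $(X,{\rm graph}(T))$ is a Fredholm pair iff $T$ is Fredholm, with $\ind(X,{\rm graph}(T))=\ind T$, and that $(B_2^\perp,B_1)$ is a Fredholm pair with $\ind(B_2^\perp,B_1)=\ind(I+S^*T)$. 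Now let $U$ be the unilateral shift, $V=2U\oplus\tfrac12U^*$, $T=\id$ and $S=V^*$. Then $\ind(X,B_1)=\ind(X,B_2)=0$, while $\ind(B_2^\perp,B_1)=\ind(I+V)=-1$, because $1+2U$ has index $-1$ and $1+\tfrac12U^*$ is invertible. So the ``identity about Fredholm pairs'' you want fails; correspondingly, the difference $\Pi_<\Pi_{B_2^\perp}|_{B_1}$ is in general not compact, and your homotopy $T_s=\Pi_<(sI+(1-s)\Pi_{B_2})|_{B_1}$ cannot remain Fredholm for all $s$ in such situations. Any correct argument must exploit that $B_1,B_2$ are elliptic boundary conditions for $\DD$, not just that the pairs are Fredholm. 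The paper does this by working with the operator itself: using the Fredholm pair $(B_2^\perp,B_1)$ it writes $B_1=(B_2^\perp\cap B_1)\,\dot+\,{\rm graph}(\phi)$ with $\phi:(B_2^\perp+B_1)\cap B_2\to B_2^\perp$, deforms $B_1$ through the elliptic boundary conditions $B_{1,s}=(B_2^\perp\cap B_1)\,\dot+\,{\rm graph}(s\phi)$ (along which $\ind\DD_{B_{1,s}}$ and $\ind(B_2^\perp,B_{1,s})$ are constant, by the arguments of \cite[Theorem~8.12]{BarBallmann12}), and then compares the deformed $B_1$ and $B_2$ inside the common elliptic condition $B_1\oplus X$, where $X$ is the finite-dimensional orthogonal complement of $B_2^\perp+B_1$, via the relative index formula for nested boundary conditions (\cite[Corollary~8.8]{BarBallmann12}); subtracting gives \eqref{E:indfor-Fredpair-2}.

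There is also a structural problem with your treatment of \eqref{E:indfor-Fredpair-1}: you derive it from a guessed reading of \cite[Proposition~5.8]{BrShi17} (``one expects the latter to handle precisely the special case $B_2=B_\aps$''), which is not an argument — and if that proposition did already cover arbitrary elliptic $B_1$ against the APS condition, formula \eqref{E:indfor-Fredpair-1} would not be stated here as a generalization requiring proof. In the paper the logic runs in the opposite direction: \eqref{E:indfor-Fredpair-2} is proved first, and \eqref{E:indfor-Fredpair-1} follows by applying it twice with $B_2$ replaced by the APS condition $H^{1/2}_{(-\infty,0)}(\AAA)$, whose orthogonal complement is $H^{1/2}_{[0,\infty)}(\AAA)$. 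So both halves of your proposal ultimately rest on unproven steps; the needed content is precisely the direct proof of \eqref{E:indfor-Fredpair-2} sketched above. (Your final remark about $B^\ad=c(\tau)B^*$ concerns Theorem~\ref{T:equiv-ellbc}, not this theorem, and is indeed already settled there.)
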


\begin{remark}\label{R:indfor-Fredpair}
Note that the hypothesis that $(B_2^\perp,B_1)$ is a Fredholm pair is essential. In Section \ref{S:locbvp} (cf. Remark~\ref{R:notFredholmpair}), we will provide an example that the hypothesis does not hold.
\end{remark}

\begin{proof}
We first show that \eqref{E:indfor-Fredpair-1} can be implied by \eqref{E:indfor-Fredpair-2}. Since both $(H_{[0,\infty)}^{1/2}(\AAA),B_1)$ and $(H_{[0,\infty)}^{1/2}(\AAA),B_2)$ are Fredholm pairs in $H_\AAA^{1/2}(\dm,E_{\dm})$, by \eqref{E:indfor-Fredpair-2},
\[
\begin{aligned}
\ind\DD_{B_1}\;-\;\ind\DD_{\rm APS}&\;=\;\ind(H_{[0,\infty)}^{1/2}(\AAA),B_1), \\
\ind\DD_{B_2}\;-\;\ind\DD_{\rm APS}&\;=\;\ind(H_{[0,\infty)}^{1/2}(\AAA),B_2),
\end{aligned}
\]
where $\DD_{\rm APS}$ denotes the APS boundary value problem for $\DD$. Then \eqref{E:indfor-Fredpair-1} is verified by taking the difference of the two equations.

We now prove \eqref{E:indfor-Fredpair-2}. Since $(B_2^\perp,B_1)$ is a Fredholm pair in $H_\AAA^{1/2}(\dm,E_{\dm})$, we can adapt the idea of Subsection \ref{SS:if} to write $B_1$ in the following form
\[
B_1\;=\;(B_2^\perp\cap B_1)\;\dot{+}\;{\rm graph}(\phi),
\]
where $\phi$ is a bounded operator from $(B_2^\perp+B_1)\cap B_2$ to $B_2^\perp$. For $0\le s\le1$, let
\[
B_{1,s}\;=\;(B_2^\perp\cap B_1)\;\dot{+}\;{\rm graph}(s\phi).
\]
Then $B_{1,1}=B_1$ and $B_{1,s}$ is an elliptic boundary condition for each $s$. Moreover, $(B_2^\perp,B_{1,s})$ is a Fredholm pair in $H_\AAA^{1/2}(\dm,E_{\dm})$ and
\[
\ind(B_2^\perp,B_{1,s})\;=\;\ind(B_2^\perp,B_1),\qquad\mbox{for any }s\in[0,1].
\]
Consider the family of Fredholm operators $\DD_{B_{1,s}}$. Applying the arguments of \cite[Theorem 8.12]{BarBallmann12} indicates that
\[
\ind\DD_{B_{1,s}}\;=\;\ind\DD_{B_1},\qquad\mbox{for any }s\in[0,1].
\]
Thus without loss of generality, one can assume that
\[
B_1\;=\;B_{1,0}\;=\;(B_2^\perp\cap B_1)\;\oplus\;((B_2^\perp+B_1)\cap B_2).
\]

Let $X$ be the orthogonal complement of $B_2^\perp+B_1$ in $H_\AAA^{1/2}(\dm,E_{\dm})$. Note that $X$ is a finite-dimensional space, so $B_1\oplus X$ is still an elliptic boundary condition for $\DD$. Since $B_1,B_2\subset B_1\oplus X$, using the idea of \cite[Corollary 8.8]{BarBallmann12}, we have
\[
\begin{aligned}
\ind\DD_{B_1}\;-\;\ind\DD_{B_1\oplus X}&\;=\;-\dim X\;=\;-\codim(B_2^\perp+B_1), \\
\ind\DD_{B_2}\;-\;\ind\DD_{B_1\oplus X}&\;=\;-\dim(B_2^\perp\cap B_1).
\end{aligned}
\]
Taking the difference of the two equations yields \eqref{E:indfor-Fredpair-2}.
\end{proof}

\section{A local boundary value problem for strongly Callias-type operators}\label{S:locbvp}

In this section we introduce a local boundary condition for $\DD$ and show that it is elliptic.

\subsection{Splitting of the vector bundle on the boundary}\label{SS:bundlesplit}

From now on we assume that there is given an orthogonal decomposition
\begin{equation}\label{E:bundlesplitting}
	E_{\dm}\;=\;\E^+\;\oplus \E^-
\end{equation}
such that
\begin{equation}\label{E:operatorsplitting}
		\AAA\ =\
	\begin{pmatrix}
		0 & \AAA^- \\
		\AAA^+& 0
\end{pmatrix}  
\end{equation}
with respect to the grading \eqref{E:bundlesplitting}. Here $\AAA^-=(\AAA^+)^*$. 

The grading \eqref{E:bundlesplitting}  might or might not be induced by the Clifford multiplication $c(\tau)$. 
One of the main and most natural examples of such grading is the grading defined in Section~\ref{SS:grading}. This is the grading considered by Freed in \cite{Freed98}.

\subsection{Boundary value space induced by a graded bundle}\label{SS:splitbvs}

By the decomposition \eqref{E:bundlesplitting}, each $\bfu\in H_\AAA^{1/2}(\dm,E_\pM)$ has the form $\bfu=(\bfu^+,\bfu^-)$. Consider closed subspaces
\begin{equation}\label{E:localbc}
	B^+\;=\;\{(\bfu^+,0)\in H_\AAA^{1/2}(\dm,E_\pM)\},
	\qquad 
	B^-\;=\;\{(0,\bfu^-)\in H_\AAA^{1/2}(\dm,E_\pM)\}
\end{equation}
of $H_\AAA^{1/2}(\dm,E_\pM)$. Then $H_\AAA^{1/2}(\dm,E_\pM)=B^+\oplus B^-$.

\begin{proposition}\label{P:FredpairB+}
$(H_{[0,\infty)}^{1/2}(\tAAA),B^+)$ is a Fredholm pair in $H_\AAA^{1/2}(\dm,E_{\dm})$ and
\begin{equation}\label{E:FredpairB+}
\ind(H_{[0,\infty)}^{1/2}(\tAAA),B^+)\;=\;\dim\ker\tAAA^+.
\end{equation}
Similarly, $(H_{[0,\infty)}^{1/2}(\tAAA),B^-)$ is a Fredholm pair and 
\begin{equation}\label{E:FredpairB-}
	\ind(H_{[0,\infty)}^{1/2}(\tAAA),B^-)\;=\;\dim\ker\tAAA^-.
\end{equation}
\end{proposition}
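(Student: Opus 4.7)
The proof will exploit the fact that the block form \eqref{E:operatorsplitting} makes $\AAA$ anticommute with the grading involution $\gamma := \id_{\E^+}\oplus(-\id_{\E^-})$. Since $\AAA$ is self-adjoint strongly Callias-type on $\pM$, it has discrete spectrum, and this anticommutation forces the spectrum to be symmetric about $0$: for every $\mu>0$ the map $\gamma$ carries the $\mu$-eigenspace $V_\mu$ isomorphically onto $V_{-\mu}$, while the $(\pm 1)$-eigenspaces of $\gamma$ restricted to $V_0:=\ker\AAA$ are exactly $\ker\AAA^+$ and $\ker\AAA^-$. Accordingly, $H_\AAA^{1/2}(\pM,E_\pM)$ splits as an $H_\AAA^{1/2}$-orthogonal Hilbert sum of finite-dimensional blocks: the block at $\mu>0$ is $V_\mu\oplus V_{-\mu}=W_\mu^+\oplus W_\mu^-$, where $W_\mu^\pm\subset \E^\pm$ is the $\mu^2$-eigenspace of $\AAA^\mp\AAA^\pm$, and the block at $\mu=0$ is $V_0$ itself.

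Next I will read off the Fredholm-pair data one block at a time. Inside $V_\mu\oplus V_{-\mu}$ for $\mu>0$, $V_\mu$ appears as the graph $\{(\bfv^+,\mu^{-1}\AAA^+\bfv^+):\bfv^+\in W_\mu^+\}$, so $V_\mu\cap W_\mu^+=\{0\}$ and, by a dimension count, $V_\mu+W_\mu^+=V_\mu\oplus V_{-\mu}$; inside $V_0$ we have $V_0\cap B^+=\ker\AAA^+$ and $V_0+\ker\AAA^+=V_0$. Summing block-wise yields the algebraic identities $H^{1/2}_{[0,\infty)}(\AAA)\cap B^+=\ker\AAA^+$ (finite-dimensional by discreteness of the spectrum) and $H^{1/2}_{[0,\infty)}(\AAA)+B^+=H_\AAA^{1/2}(\pM,E_\pM)$ on the algebraic direct sum of blocks.

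The principal technical step, and the one I expect to require the most care, is to promote the second identity to the full Hilbert completion and in particular to deduce closedness of the sum. I plan to do this via a uniform transversality estimate: since $(\bfv^+,0)\in W_\mu^+$ and $(\bfv^+,\mu^{-1}\AAA^+\bfv^+)\in V_\mu$ are both $\AAA^2$-eigenvectors with eigenvalue $\mu^2$, a short calculation shows that the opening angle between $V_\mu$ and $W_\mu^+$ equals $\pi/4$ in both the $L^2$- and the $H_\AAA^{1/2}$-inner product, independently of $\mu$. Combined with the mutual $H_\AAA^{1/2}$-orthogonality of distinct blocks, this produces a global bound $\|\bfv+\bfw\|_{H_\AAA^{1/2}}^2\geq c^2(\|\bfv\|_{H_\AAA^{1/2}}^2+\|\bfw\|_{H_\AAA^{1/2}}^2)$ for $\bfv\in H^{1/2}_{[0,\infty)}(\AAA)$ and $\bfw\in B^+$, from which both closedness of the sum and block-wise decomposability of any $z\in H_\AAA^{1/2}(\pM,E_\pM)$ follow at once.

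Putting everything together, $(H^{1/2}_{[0,\infty)}(\AAA),B^+)$ is a Fredholm pair in $H_\AAA^{1/2}(\pM,E_\pM)$ with $\dim$-intersection $\dim\ker\AAA^+$ and codimension $0$, giving \eqref{E:FredpairB+}. The identity \eqref{E:FredpairB-} for $B^-$ then follows by repeating the argument verbatim with the roles of $\E^+$ and $\E^-$ (equivalently, of $\AAA^+$ and $\AAA^-$) interchanged.
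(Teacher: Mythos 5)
Your argument is correct in substance and rests on the same key observation as the paper: because $\AAA$ is off-diagonal with respect to $E_{\dm}=\E^+\oplus\E^-$, the involution $\gamma=\id_{\E^+}\oplus(-\id_{\E^-})$ anticommutes with $\AAA$ and hence reverses the sign of eigenvalues; this flip is exactly how Lemma~\ref{L:B=ker} identifies $H^{1/2}_{[0,\infty)}(\AAA)\cap B^+$ with $\ker\AAA^+$. Where you diverge is in the Fredholm-pair part. The paper needs no block decomposition and no transversality estimate: for $\bfu\in H^{1/2}_{(-\infty,0)}(\AAA)$ it writes $\bfu=-\gamma\bfu+(\bfu+\gamma\bfu)$, where $\gamma\bfu\in H^{1/2}_{(0,\infty)}(\AAA)$ (since $\gamma$ preserves the $H^{1/2}_\AAA$-norm and reverses the spectrum) and $\bfu+\gamma\bfu=(2\bfu^+,0)\in B^+$; hence $H^{1/2}_{[0,\infty)}(\AAA)+B^+$ is all of $H^{1/2}_\AAA(\dm,E_{\dm})$, so closedness and vanishing of the codimension are automatic and the index is $\dim\ker\AAA^+$. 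Your blockwise uniform-angle argument reaches the same conclusion with more machinery; its only extra payoff is a quantitative lower bound for the transversality, which is not needed here.

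One step of your "principal technical step" needs repair as stated: the global estimate $\|\bfv+\bfw\|_{H^{1/2}_\AAA}^2\ge c^2\bigl(\|\bfv\|_{H^{1/2}_\AAA}^2+\|\bfw\|_{H^{1/2}_\AAA}^2\bigr)$ for all $\bfv\in H^{1/2}_{[0,\infty)}(\AAA)$ and $\bfw\in B^+$ is false whenever $\ker\AAA^+\neq 0$, since taking $\bfw=-\bfv$ to be a nonzero element of the intersection makes the left-hand side vanish. The uniform $\pi/4$ angle is valid only on the blocks with $\mu>0$; the zero block $V_0=\ker\AAA$ contains the intersection and must be treated separately, which is harmless because it is finite dimensional ($\AAA$ has discrete spectrum). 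So either restrict $\bfw$ to the $H^{1/2}_\AAA$-orthogonal complement of $\ker\AAA^+$ in $B^+$ or exclude the $\mu=0$ block before summing. You should also make explicit, when "summing block-wise," that the spectral projections $P^{\AAA}_{\{\mu,-\mu\}}$ commute with $\gamma$, so the block components of an element of $B^+$ remain in $B^+$; this is what legitimizes reading off the intersection blockwise. With these two adjustments your proof is complete, including the verbatim exchange of $\E^+$ and $\E^-$ that gives \eqref{E:FredpairB-}.
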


The proof  of the proposition is based on the following
\begin{lemma}\label{L:B=ker}
We have
\begin{equation}\label{E:Bsub=ker}
	H_{[0,\infty)}^{1/2}(\tAAA)\,\cap\,B^+
	\ = \ \big\{\,(\bfu^+,0):\, \bfu^+\in \ker\tAAA^+\,\big\}.
\end{equation}
In particular, $H_{[0,\infty)}^{1/2}(\tAAA)\,\cap\,B^+\;\cong\;\ker\tAAA^+$.
\end{lemma}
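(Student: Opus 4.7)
The reverse inclusion ``$\supseteq$'' is immediate: if $\bfu^+\in\ker\AAA^+$, then $(\bfu^+,0)$ lies in $B^+$ by definition, and the antidiagonal form \eqref{E:operatorsplitting} gives $\AAA(\bfu^+,0)=(0,\AAA^+\bfu^+)=0$, so $(\bfu^+,0)\in\ker\AAA\subset H_{[0,\infty)}^{1/2}(\AAA)$.

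For the forward inclusion ``$\subseteq$'' the plan is to exploit the symmetry of $\AAA$ with respect to the grading \eqref{E:bundlesplitting}. Let $\sigma$ denote the bounded involution
\[
\sigma:\,(\bfv^+,\bfv^-)\,\longmapsto\,(\bfv^+,-\bfv^-)
\]
on $L^2(\pM,E_\pM)$; the antidiagonal shape of $\AAA$ forces the anticommutation relation $\sigma\AAA=-\AAA\sigma$. The point is that $B^+$ coincides with the $+1$-eigenspace of $\sigma$ inside $H_\AAA^{1/2}(\pM,E_\pM)$, so any $\bfu=(\bfu^+,0)$ in $B^+$ satisfies $\sigma\bfu=\bfu$.

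Combining this symmetry with the fact that $\AAA$, being self-adjoint and strongly Callias-type, has discrete spectrum, I expand $\bfu\in H_{[0,\infty)}^{1/2}(\AAA)\cap B^+$ in an orthonormal eigenbasis of $\AAA$ as $\bfu=\sum_{\lambda\ge 0}c_\lambda\psi_\lambda$. Since $\sigma$ intertwines the $\lambda$- and $(-\lambda)$-eigenspaces bijectively and preserves $\ker\AAA$, the identity $\bfu=\sigma\bfu=\sum_{\lambda\ge 0}c_\lambda\sigma\psi_\lambda$ expresses $\bfu$ as a linear combination of eigenvectors with eigenvalues in $(-\infty,0]$. Uniqueness of the spectral expansion then forces $c_\lambda=0$ for every $\lambda>0$, so $\bfu\in\ker\AAA$, and the identity $\AAA(\bfu^+,0)=(0,\AAA^+\bfu^+)=0$ gives $\bfu^+\in\ker\AAA^+$.

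The only mild subtlety is keeping the argument at the $H_\AAA^{1/2}$-Sobolev level rather than in $L^2$, but this is not a real obstacle: $\sigma$ commutes with $\AAA^2$ and hence with $(\id+\AAA^2)^{s/2}$, so it is bounded on every $H_\AAA^s(\pM,E_\pM)$, and the spectral projectors $P_I^\AAA$ are continuous on these spaces by Definition~\ref{D:Sobolev}, so the eigenfunction series converges unconditionally in the required norm. No other obstacle is anticipated — the lemma is essentially bookkeeping around the anticommutation $\sigma\AAA=-\AAA\sigma$.
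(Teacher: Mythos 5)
Your proof is correct and is essentially the paper's argument: the involution $\sigma:(\bfv^+,\bfv^-)\mapsto(\bfv^+,-\bfv^-)$ with $\sigma\AAA=-\AAA\sigma$ is exactly the termwise sign-flip $\bfv_j\mapsto\tilde{\bfv}_j$ the paper applies to the eigenvector expansion, leading to the same conclusion $\bfu\in H^{1/2}_{(-\infty,0]}(\AAA)\cap H^{1/2}_{[0,\infty)}(\AAA)=\ker\AAA$. Your packaging via $\sigma$ and the remark on boundedness of $\sigma$ on $H^s_\AAA$ only makes explicit what the paper leaves implicit; no gap.
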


\begin{proof}
Since $\tAAA$ is anti-diagonal with respect to the grading $E_\pM= \E^+\oplus \E^-$, one readily sees that if $\bfv=(\bfv^+,\bfv^-)$ is an eigenvector of $\tAAA$ associated with eigenvalue $\lambda$, then $\tilde{\bfv}:= (\bfv^+,-\bfv^-)$  is an eigenvector of $\tAAA$ associated with eigenvalue $-\lambda$.

Each $\bfu=(\bfu^+,0)\in H_{[0,\infty)}^{1/2}(\tAAA)\,\cap\,B^+$  has an expansion into a sum of eigenvectors  
$\bfu= \sum a_j\bfv_j$, where $a_j\in \CC$ and $\bfv_j=(\bfv_j^+,\bfv_j^-)$ is an eigenvector of $\AAA$ associated with eigenvalue $\lambda_j\ge0$. Then $\bfu^+= \sum a_j\bfv_j^+$ and $0= \sum a_j\bfv_j^-$. 
It follows that $\bfu= \sum a_j\tilde{\bfv_j}$, where as above $\tilde{\bfv_j}= (\bfv_j^+,-\bfv_j^-)$. Since $\tilde{\bfv_j}$'s are eigenvectors associated with non-positive eigenvalues we conclude that 
$\bfu\in H_{(-\infty,0]}^{1/2}(\tAAA)$. Since 
\[
	H_{(-\infty,0]}^{1/2}(\tAAA)\,\cap\,H_{[0,\infty)}^{1/2}(\tAAA)
	\;= \;\ker\tAAA,
\]
we obtain $\bfu=(\bfu^+,0)\in \ker\tAAA$. Thus $\bfu^+\in \ker\tAAA^+$. It follows that $H_{[0,\infty)}^{1/2}(\tAAA)\,\cap\,B^+\subset \big\{\,(\bfu^+,0):\, \bfu^+\in \ker\tAAA^+\,\big\}$. The opposite inclusion is obvious. 
\end{proof}

\begin{proof}[Proof of Proposition~\ref{P:FredpairB+}]
We only proof \eqref{E:FredpairB+}. The proof of the other equality is analogous. 

In view of Lemma~\ref{L:B=ker}, we only need to show that
\begin{equation}\label{E:subspacesum+}
	H_{[0,\infty)}^{1/2}(\tAAA)\,+\,B^+   \;=\;
		H_{\tAAA}^{1/2}(\dm,E_{\dm}).
\end{equation}
Choose an arbitrary $\bfu=(\bfu^+,\bfu^-)\in H_{(-\infty,0)}^{1/2}(\tAAA)$. Then $\bfv_1:=(\bfu^+,-\bfu^-)\in H_{[0,\infty)}^{1/2}(\tAAA)$. Now let $\bfv_2=(2\bfu^+,0)\in B^+$. Then $-\bfv_1+\bfv_2=\bfu$. So
\[
	H_{(-\infty,0)}^{1/2}(\tAAA)
	\;\subset\;
	H_{[0,\infty)}^{1/2}(\tAAA)\,+\,B^+.
\]
Therefore \eqref{E:subspacesum+} is true. This completes the proof of the proposition.
\end{proof}

Since $H_{[0,\infty)}^{1/2}(\tAAA)$ and $H_{(-\infty,0)}^{1/2}(\tAAA)$, $B^\pm$ and $B^\mp$ are orthogonal to each other as subspaces in $H_{\tAAA}^{1/2}(\dm,E_{\dm})$, it follows from Proposition \ref{P:FredpairB+} that

\begin{corollary}\label{C:FredpairB+}
 $(H_{(-\infty,0)}^{1/2}(\tAAA),B^\pm)$ are Fredholm pairs in $H_\AAA^{1/2}(\dm,E_{\dm})$ and 
\begin{equation}\label{E:FredpairBpm}
	\ind(H_{(-\infty,0)}^{1/2}(\tAAA),B^\pm)\;=\;-\dim\ker\tAAA^\mp.
\end{equation}
\end{corollary}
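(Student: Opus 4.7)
The plan is to deduce this corollary from Proposition~\ref{P:FredpairB+} via an orthogonal-complement duality for Fredholm pairs, combined with two explicit orthogonality observations about the spaces involved.

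First I would record the following standard Hilbert-space lemma: if $(X,Y)$ is a Fredholm pair of closed subspaces of a Hilbert space $Z$, then $(X^\perp,Y^\perp)$ is also a Fredholm pair in $Z$ and
\[
\ind(X^\perp,Y^\perp)\;=\;-\,\ind(X,Y).
\]
This follows from the identities $X^\perp\cap Y^\perp=(X+Y)^\perp$ and $\overline{X^\perp+Y^\perp}=(X\cap Y)^\perp$, combined with the well-known fact that $X+Y$ is closed in $Z$ if and only if $X^\perp+Y^\perp$ is closed in $Z$; the first identity yields $\dim(X^\perp\cap Y^\perp)=\codim(X+Y)$ and the second yields $\codim(X^\perp+Y^\perp)=\dim(X\cap Y)$.

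Next I would verify that the two decompositions of $Z=H_\AAA^{1/2}(\pM,E_\pM)$ appearing in the statement are orthogonal. For the spectral decomposition, $\tAAA$ is self-adjoint and the $H_\AAA^{1/2}$-inner product is defined through the functional calculus of $\tAAA$, so it commutes with the spectral projections $P_I^{\tAAA}$; hence eigenspaces of $\tAAA$ for distinct eigenvalues are mutually $H_\AAA^{1/2}$-orthogonal, and in particular $H_{[0,\infty)}^{1/2}(\tAAA)$ and $H_{(-\infty,0)}^{1/2}(\tAAA)$ are orthogonal complements in $Z$. For the bundle decomposition, the anti-diagonal form \eqref{E:operatorsplitting} makes $\tAAA^2$ block-diagonal with respect to $E_\pM=\E^+\oplus\E^-$; hence so is $(1+\tAAA^2)^{1/2}$, so the subspaces $B^+$ and $B^-$ are likewise orthogonal complements in $Z$.

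Applying the duality lemma to the Fredholm pair $(H_{[0,\infty)}^{1/2}(\tAAA),B^-)$ from Proposition~\ref{P:FredpairB+} then gives that $(H_{(-\infty,0)}^{1/2}(\tAAA),B^+)$ is Fredholm with index $-\dim\ker\tAAA^-$, and applying it to $(H_{[0,\infty)}^{1/2}(\tAAA),B^+)$ gives that $(H_{(-\infty,0)}^{1/2}(\tAAA),B^-)$ is Fredholm with index $-\dim\ker\tAAA^+$; together these are precisely the two cases of \eqref{E:FredpairBpm}. The only genuinely nonformal point in this scheme is the closedness half of the dual-sum equivalence---that $X+Y$ closed forces $X^\perp+Y^\perp$ closed---and this is the step I would be most careful with; it is, however, a well-known consequence of the closed range theorem and can be invoked off the shelf.
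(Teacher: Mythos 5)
Your proposal is correct and follows essentially the same route as the paper: the paper deduces the corollary from Proposition~\ref{P:FredpairB+} precisely by noting that $H_{[0,\infty)}^{1/2}(\tAAA)$, $H_{(-\infty,0)}^{1/2}(\tAAA)$ and $B^+$, $B^-$ are orthogonal complements in $H_\AAA^{1/2}(\dm,E_{\dm})$, leaving the duality $\ind(X^\perp,Y^\perp)=-\ind(X,Y)$ implicit. You have merely spelled out that standard duality lemma and the two orthogonality checks, which is exactly what the paper's one-line argument relies on.
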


From Proposition \ref{P:FredpairB+} and Corollary \ref{C:FredpairB+}, one readily sees that $B^\pm$ satisfies Theorem \ref{T:equiv-ellbc}. Therefore we get

\begin{theorem}\label{T:ellbc}
$B^\pm$ is an elliptic boundary condition for $\DD$, whose adjoint boundary condition is $c(\tau)B^\mp$.
\end{theorem}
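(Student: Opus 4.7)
My strategy is to verify the three conditions in the characterization of elliptic boundary conditions provided by Theorem~\ref{T:equiv-ellbc}. I will do it for $B^+$; the case of $B^-$ is completely symmetric.

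First, I would note that $B^+$ is closed in $H_\AAA^{1/2}(\dm, E_\pM)$, giving condition (1). Since $\AAA$ is anti-diagonal with respect to the splitting $E_\pM = \E^+ \oplus \E^-$, the operator $\AAA^2$, and hence $(\id + \AAA^2)^{s/2}$, is block-diagonal. Consequently, the bundle splitting induces an orthogonal decomposition $H_\AAA^s(\dm, E_\pM) = H_\AAA^s(\dm, \E^+) \oplus H_\AAA^s(\dm, \E^-)$ for every $s \in \RR$, and $B^+$ is the closed subspace corresponding to the first factor at $s = 1/2$.

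Next, Proposition~\ref{P:FredpairB+} shows that $(H_{[0,\infty)}^{1/2}(\AAA), B^+)$ is a Fredholm pair in $H_\AAA^{1/2}(\dm, E_\pM)$ with index $\dim \ker \AAA^+$. To check the second Fredholm pair condition of Theorem~\ref{T:equiv-ellbc}, I need to identify $(B^+)^*$ from \eqref{E:Bstar}. Because the $L^2$-pairing respects the splitting $\E^+ \oplus \E^-$, the annihilator of $B^+$ inside $H_\AAA^{-1/2}(\dm, E_\pM)$ is exactly $H_\AAA^{-1/2}(\dm, \E^-)$; intersecting with $H_\AAA^{1/2}(\dm, E_\pM)$ yields $(B^+)^* = B^-$. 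Corollary~\ref{C:FredpairB+} then provides that $(H_{(-\infty,0)}^{1/2}(\AAA), B^-)$ is a Fredholm pair with index $-\dim \ker \AAA^+$, verifying conditions (2) and (3) simultaneously, since the two indices are opposite.

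Having established all three conditions, Theorem~\ref{T:equiv-ellbc} concludes that $B^+$ is an elliptic boundary condition and delivers the identification $(B^+)^\ad = c(\tau)(B^+)^* = c(\tau) B^-$, as claimed. The only step that requires genuine thought is the identification $(B^+)^* = B^-$, which rests on the compatibility of the $L^2$-pairing with the orthogonal splitting of $E_\pM$; all other steps slot directly into the framework developed earlier in the section.
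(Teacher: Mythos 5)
Your proof is correct and follows essentially the same route as the paper: the paper simply observes that Proposition~\ref{P:FredpairB+} and Corollary~\ref{C:FredpairB+} let $B^\pm$ satisfy the criteria of Theorem~\ref{T:equiv-ellbc}, exactly as you do. Your explicit verification that the splitting is preserved by $(\id+\AAA^2)^{s/2}$ and that $(B^+)^*=B^-$ just fills in details the paper leaves implicit in the phrase ``one readily sees.''
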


We call $B^\pm$ the \emph{boundary condition subject to the grading \eqref{E:bundlesplitting}}. It is a local boundary condition for $\DD$.

\begin{remark}\label{R:notFredholmpair}
If in this situation we let $B^+$ and $B^-$ be the $B_1$ and $B_2$ as in Theorem \ref{T:indfor-Fredpair}, then $(B_2^\perp,B_1)=(B^+,B^+)$ is not a Fredholm pair (cf. Remark \ref{R:indfor-Fredpair}).
\end{remark}

\begin{corollary}\label{C:ellbc}
$\DD_{B^+}$ (resp. $\DD_{B^-}$) is a Fredholm operator, whose adjoint operator is $\DD_{c(\tau)B^-}$ (resp. $\DD_{c(\tau)B^+}$).
\end{corollary}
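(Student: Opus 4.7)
The plan is to deduce the corollary as an immediate consequence of Theorem~\ref{T:ellbc} together with the general Fredholm theory of elliptic boundary value problems for strongly Callias-type operators recalled in Subsection~\ref{SS:boundaryvalue} (and established in \cite{BrShi17}).

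First, I would invoke Theorem~\ref{T:ellbc} to conclude that $B^+$ (resp. $B^-$) is an elliptic boundary condition for $\DD$ in the sense of Definition~\ref{D:ellipticbc}, and that its adjoint boundary condition is $(B^+)^\ad = c(\tau) B^-$ (resp. $(B^-)^\ad = c(\tau) B^+$). Next, I would appeal to the general fact, recalled immediately after Definition~\ref{D:ellipticbc}, that for any elliptic boundary condition $B\subset \cHH(\AAA)$ the operator $\DD_B$ defined by
\[
	\DD_B:\, \{u\in \dom\Dmax : u|_\pM \in B\}\ \to\ L^2(M,E)
\]
is Fredholm. Applied to $B = B^\pm$ this yields the Fredholm property of $\DD_{B^+}$ and $\DD_{B^-}$.

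For the adjoint statement, I would use the standard fact (established in the boundary-value-problem framework of \cite{BrShi17}) that for any elliptic boundary condition $B$ one has
\[
	(\DD_B)^*\ =\ \DD_{B^\ad},
\]
as unbounded operators on $L^2(M,E)$; this is precisely what makes the index $\ind \DD_B = \dim\ker\DD_B - \dim\ker\DD_{B^\ad}$ well defined as in Subsection~\ref{SS:boundaryvalue}. Combining this with the identification $(B^\pm)^\ad = c(\tau) B^\mp$ from Theorem~\ref{T:ellbc} gives $(\DD_{B^+})^* = \DD_{c(\tau)B^-}$ and $(\DD_{B^-})^* = \DD_{c(\tau)B^+}$, which is the desired conclusion.

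There is essentially no obstacle here: the corollary is a direct consequence of Theorem~\ref{T:ellbc} and the general theory. The only thing to be careful about is citing the correct statements from \cite{BrShi17} — namely that ellipticity implies Fredholmness and that taking the adjoint operator corresponds to taking the adjoint boundary condition — so that no further argument is required.
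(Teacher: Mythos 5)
Your proposal matches the paper's (implicit) argument exactly: the corollary is read off from Theorem~\ref{T:ellbc} together with the facts recalled in Subsection~\ref{SS:boundaryvalue} from \cite{BrShi17}, namely that an elliptic boundary condition yields a Fredholm operator and that the adjoint operator is the one with the adjoint boundary condition $(B^\pm)^\ad = c(\tau)B^\mp$. Nothing further is needed.
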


\subsection{An index theorem}\label{SS:indextheorem}

Substitution $B_1=B^+$ and $B_2=B^-$ in Theorem~\ref{T:indfor-Fredpair} and using Proposition \ref{P:FredpairB+}, we get the following index formula for the local boundary value problem: 

\begin{theorem}\label{T:inddiff}
Let $B^\pm$ be as in Subsections~\ref{SS:splitbvs}. Then
\begin{equation}\label{E:inddiff}
	\ind\DD_{B^+}\;-\;\ind\DD_{B^-}\;=\;\ind\tAAA.
\end{equation}
\end{theorem}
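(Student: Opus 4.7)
The plan is to apply Theorem~\ref{T:indfor-Fredpair} directly with $B_1 = B^+$ and $B_2 = B^-$. By Theorem~\ref{T:ellbc} both of these are elliptic boundary conditions for $\DD$, so the hypotheses of Theorem~\ref{T:indfor-Fredpair} are met and equation \eqref{E:indfor-Fredpair-1} applies. The important point is that we must use \eqref{E:indfor-Fredpair-1} rather than \eqref{E:indfor-Fredpair-2}: indeed, by Remark~\ref{R:notFredholmpair} the pair $(B_2^\perp, B_1) = (B^+,B^+)$ fails to be Fredholm in $H_\AAA^{1/2}(\pM,E_\pM)$, so the alternative formula is unavailable here.

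Substituting $B_1 = B^+$ and $B_2 = B^-$ into \eqref{E:indfor-Fredpair-1} gives
\[
	\ind\DD_{B^+} \,-\, \ind\DD_{B^-}
	\ =\ \ind\bigl(H_{[0,\infty)}^{1/2}(\AAA),\,B^+\bigr)
		\,-\,\ind\bigl(H_{[0,\infty)}^{1/2}(\AAA),\,B^-\bigr).
\]
The two Fredholm-pair indices on the right were computed in Proposition~\ref{P:FredpairB+}: they equal $\dim\ker\AAA^+$ and $\dim\ker\AAA^-$ respectively. Hence
\[
	\ind\DD_{B^+} \,-\, \ind\DD_{B^-}
	\ =\ \dim\ker\AAA^+ \,-\, \dim\ker\AAA^-.
\]

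Finally, since $\AAA^- = (\AAA^+)^*$, we have $\dim\ker\AAA^- = \dim\coker\AAA^+$, so the right-hand side is exactly $\ind\AAA^+$, which (with respect to the grading \eqref{E:bundlesplitting} on $E_\pM$) is $\ind\AAA$. This yields \eqref{E:inddiff}.

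There is no real obstacle in this last step; all the work has been done in Proposition~\ref{P:FredpairB+} and Theorem~\ref{T:indfor-Fredpair}. The only thing to watch for is the subtlety about which form of the index formula is applicable, and to record explicitly that $\ind\AAA$ is understood via the grading from \eqref{E:bundlesplitting} (so that $\ind\AAA = \dim\ker\AAA^+ - \dim\ker\AAA^-$), consistent with how $B^\pm$ were defined.
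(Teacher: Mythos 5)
Your proposal is correct and follows essentially the same route as the paper: the paper also obtains \eqref{E:inddiff} by substituting $B_1=B^+$, $B_2=B^-$ into formula \eqref{E:indfor-Fredpair-1} of Theorem~\ref{T:indfor-Fredpair} and invoking Proposition~\ref{P:FredpairB+}, with $\ind\AAA=\dim\ker\AAA^+-\dim\ker\AAA^-$ since $\AAA^-=(\AAA^+)^*$. Your explicit remark that \eqref{E:indfor-Fredpair-2} is unavailable because $(B^+,B^+)$ is not a Fredholm pair is exactly the point of Remark~\ref{R:notFredholmpair}.
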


\section{Index of the local boundary problem subject to natural gradings}\label{S:original}

In this section we formulate our main result -- the index theorem for a local boundary value problem similar to the one considered in \cite{Freed98}. First we obtain a vanishing result for the index subject to the grading $E_\pM=E_\pM^+\oplus E_\pM^-$ given by the action of $c(\tau)$, cf. Subsection~\ref{SS:grading}. As an application we obtain a new proof of the cobordism invariance of the index of a Callias-type operator. Then we assume that the boundary has multiple components $\pM=\bigsqcup N_j$ with the local boundary condition $u|_{N_j}\in E_\pM^\pm|_{N_j}$ and  obtain an extension of the index theorem of Freed, \cite{Freed98}, to our non-compact situation.

\subsection{A vanishing result}\label{SS:vanishing}

Choose the grading \eqref{E:bundlesplitting} so that $c(\tau)|_{E_{\dm}^\pm}=\pm i$ (this is, for example, the case for the spinor bundle on the boundary of an odd-dimensional manifold). Then near the boundary $\DD$ has the form
\begin{equation}\label{E:Calliasdecomp2}
	\DD\;=\;
	\begin{pmatrix}
		i & 0 \\
		0 & -i
	\end{pmatrix}\,
	\left(  \p_t+\begin{pmatrix}
					0 & \AAA^- \\
					\AAA^+ & 0
				\end{pmatrix}
	\right).
\end{equation}
In this case $c(\tau)B^\pm=B^\pm$, thus $\DD_{B^+}$ and $\DD_{B^-}$ are adjoint operator to each other by Corollary \ref{C:ellbc}. 

\begin{proposition}\label{P:vanishing}
Under the above assumption, $\ind\DD_{B^\pm}=0$.
\end{proposition}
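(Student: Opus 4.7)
The plan is to sandwich $\ind\DD_{B^\pm}$ between two identities of opposite signs and then invoke a known vanishing result to conclude.

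First, I will show that $\DD_{B^+}$ and $\DD_{B^-}$ are mutually adjoint Fredholm operators. Under the hypothesis $c(\tau)|_{E_\pM^\pm}=\pm i$, each subspace $B^\pm$ is invariant under $c(\tau)$, since $c(\tau)B^\pm=\pm i\,B^\pm=B^\pm$. By Theorem~\ref{T:ellbc}, the adjoint boundary condition of $B^\pm$ is $c(\tau)B^\mp=B^\mp$, so Corollary~\ref{C:ellbc} yields $(\DD_{B^\pm})^*=\DD_{B^\mp}$. This gives the first identity
\[
\ind\DD_{B^+}\;=\;-\ind\DD_{B^-}.
\]

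Next, Theorem~\ref{T:inddiff} provides the companion identity
\[
\ind\DD_{B^+}\;-\;\ind\DD_{B^-}\;=\;\ind\AAA.
\]
Adding the two relations gives $2\ind\DD_{B^+}=\ind\AAA$, so $\ind\DD_{B^\pm}=\pm\tfrac12\,\ind\AAA$.

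The proposition therefore reduces to showing $\ind\AAA=0$, and this is the main obstacle. Since $\AAA=\dim\ker\AAA^+-\dim\ker\AAA^-$ for a general self-adjoint odd operator need not vanish, one must use a feature specific to this setting: the fact that $\AAA$ is the restriction to $\pM$ of the strongly Callias-type operator $\DD$ on the bulk manifold $M$. This vanishing is precisely the cobordism invariance of the index of Callias-type operators, established in \cite{BrShi16}, which one invokes to close the argument. As a self-contained alternative within the present framework, one could instead attempt to construct a direct (anti-linear) intertwiner $\ker\AAA^+\cong\ker\AAA^-$ from the Clifford structure on $E_\pM$, exploiting that $c(\tau)$ endows $E_\pM$ with an additional complex structure whose $\pm i$ eigenspaces are the graded summands.
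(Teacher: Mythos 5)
Your reduction is correct as far as it goes: since $c(\tau)$ acts as $\pm i$ on $E_\pM^\pm$, indeed $c(\tau)B^\pm=B^\pm$, so Theorem~\ref{T:ellbc} and Corollary~\ref{C:ellbc} give $(\DD_{B^+})^{\ad}=\DD_{B^-}$, hence $\ind\DD_{B^+}=-\ind\DD_{B^-}$, and together with Theorem~\ref{T:inddiff} this yields $2\ind\DD_{B^\pm}=\pm\ind\AAA$. But from here your route diverges from the paper's, and the way you close the argument matters. The paper never passes through $\ind\AAA=0$: it shows directly that $\ker\DD_{B^\pm}=\{0\}$ by repeating the argument of \cite[Theorem~21.5]{BW93} for this local chiral boundary condition, and then mutual adjointness alone gives $\ind\DD_{B^\pm}=0$; the vanishing $\ind\AAA=0$ (Corollary~\ref{C:vanishing}) is then \emph{deduced} from the proposition, which is exactly the advertised new proof of cobordism invariance. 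You run this logic backwards by importing $\ind\AAA=0$ from \cite{BrShi16}. That is not literally circular, since \cite{BrShi16} is independent and its hypotheses (a self-adjoint strongly Callias-type operator on $M$ whose boundary restriction, graded by $c(\tau)$, is $\AAA$) are satisfied here; but it trades the elementary kernel-vanishing argument for a much heavier external theorem and forfeits the point of the section, since one could then no longer obtain Corollary~\ref{C:vanishing} as a new result. Note also that citing Corollary~\ref{C:vanishing} itself in place of \cite{BrShi16} would be genuinely circular.

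Your sketched ``self-contained alternative'' does not work. In the present grading $c(\tau)$ acts as $\pm i$ on $E_\pM^\pm$, so it \emph{preserves} the two summands rather than exchanging them and cannot intertwine $\ker\AAA^+$ with $\ker\AAA^-$. More fundamentally, no argument using only the Clifford structure on $E_\pM$ can give $\ind\AAA=0$: a general graded strongly Callias-type operator on a non-compact manifold $N$ that does not arise as a boundary restriction typically has nonzero index (this is the content of the Callias index theorem), so the existence of the bulk pair $(M,\DD)$ must enter the proof, as it does both in \cite{BrShi16} and in the paper's kernel-vanishing argument.
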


\begin{proof}
A verbatim repetition of the arguments in the proof of \cite[Theorem~21.5]{BW93} shows that $\ker\DD_{B^\pm}= \{0\}$. Since $(\DD_{B^+})^{\ad}= \DD_{B^-}$, the index of $\DD_{B^\pm}$ vanishes. 
\end{proof}

Combining this proposition with Theorem~\ref{T:inddiff}, we obtain the following cobordism invariance of the index of strongly Callias-type operators (cf. \cite{BrShi16} where this result is proven by a different method. Yet another proof is given in \cite[\S2.7]{BrCecchini17}):

\begin{corollary}\label{C:vanishing}
Let $\AAA:C^\infty(N,E^\pm)\to C^\infty(N,E^\mp)$ be a graded strongly Callias-type operator on a non-compact manifold $N$. Suppose there exist a non-compact manifold $M$, a Dirac bundle $\hat{E}\to M$ and a self-adjoint strongly Callias-type operator $\DD:C^\infty(M,\hat{E})\to C^\infty(M,\hat{E})$ such that $\dm=N$, $\hat{E}|_{\dm}=E^+\oplus E^-$ and $\DD$ takes the form \eqref{E:Calliasdecomp2} near $\dm$. Then $\ind\AAA=0$.
\end{corollary}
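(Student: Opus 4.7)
The plan is to combine Theorem~\ref{T:inddiff} with Proposition~\ref{P:vanishing} applied to the operator $\DD$ on $M$ itself. First I note that the hypothesis that $\DD$ takes the form \eqref{E:Calliasdecomp2} near $\pM$ is precisely the statement that $c(\tau)$ acts as $\pm i$ on $\hat{E}|_{\pM}^{\pm}=E^\pm$, so the bundle splitting $\hat{E}|_{\pM}=E^+\oplus E^-$ coincides with the natural grading of Subsection~\ref{SS:grading}. Moreover, since $\dm=N$ and $\AAA$ is (up to the identifications in the hypothesis) the restriction of $\DD$ to the boundary, the off-diagonal blocks $\AAA^\pm$ appearing in \eqref{E:Calliasdecomp2} are exactly the graded operator considered in the corollary.

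Consequently, Proposition~\ref{P:vanishing} applies directly to $\DD$ with this grading and yields $\ind\DD_{B^+}=\ind\DD_{B^-}=0$, where $B^\pm\subset H_\AAA^{1/2}(\pM,E_\pM)$ are the local boundary conditions defined in \eqref{E:localbc}. On the other hand, Theorem~\ref{T:inddiff} gives the identity
\[
\ind\DD_{B^+}\;-\;\ind\DD_{B^-}\;=\;\ind\AAA.
\]
Combining these two facts yields $\ind\AAA=0$, which is the claim.

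There is essentially no obstacle here: both inputs, Theorem~\ref{T:inddiff} and Proposition~\ref{P:vanishing}, are already in hand, and the only thing to verify is that the hypotheses of the corollary match exactly the setup in which these results were established. In particular, one should check that the Fredholm property of $\DD_{B^\pm}$ (needed to form the indices) follows from Corollary~\ref{C:ellbc}, and that the grading hypothesis $c(\tau)|_{E_\pM^\pm}=\pm i$ underlying Proposition~\ref{P:vanishing} really is encoded in the matrix form \eqref{E:Calliasdecomp2} assumed in the statement; both checks are immediate. The proof therefore consists of a single sentence invoking these two results in sequence.
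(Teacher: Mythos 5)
Your proposal is correct and follows the paper's own argument exactly: the form \eqref{E:Calliasdecomp2} identifies the given grading with the natural one of Subsection~\ref{SS:grading}, so Proposition~\ref{P:vanishing} gives $\ind\DD_{B^\pm}=0$, and Theorem~\ref{T:inddiff} then forces $\ind\AAA=\ind\DD_{B^+}-\ind\DD_{B^-}=0$. No gaps; this is the same one-line combination the authors intend.
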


\subsection{The case of multiple boundary components}\label{SS:multbdry}

Assume that $\dm=\bigsqcup_{j=1}^kN_j$ is a disjoint union of finitely many connected components. The restriction $E_{N_j}$ of $E$ to each connected component $N_j$ inherits the grading \eqref{E:bundlesplitting}:
\[
	E_{N_j}\;=\;E_{N_j}^+\oplus E_{N_j}^-.
\]
We denote the restriction of $\DD$ to $E_{N_j}^\pm$ by $\AAA_j^\pm$. Then $E_{\dm}^\pm=\bigoplus_{j=1}^kE_{N_j}^\pm$ and $\AAA^\pm=\bigoplus_{j=1}^k\AAA_j^\pm$. Let $\epsilon=(\epsilon_1,\dots,\epsilon_k)$ with $\epsilon_j=+$ or $-$.

\begin{definition}\label{D:mixedbc}
We call $B^\epsilon:=\bigoplus_{j=1}^kB_j^{\epsilon_j}$ the \emph{mixed boundary condition} subject to the grading \eqref{E:bundlesplitting}, where $B_j^{\epsilon_j}$ is the local boundary condition \eqref{E:localbc} on each component $(N_j,E_{N_j})$ of the boundary.
\end{definition}

Using the same arguments as in Section \ref{S:locbvp}, one can show that $B^\epsilon$ is an elliptic boundary condition for $\DD$, whose adjoint boundary condition is $B^{\bar{\epsilon}}$, where $\bar{\epsilon}_j=-\epsilon_j$.  Applying Theorem \ref{T:inddiff} to this situation we obtain the following generalization of \cite[Theorem~B]{Freed98}:

\begin{theorem}\label{T:mixindfor}
Let $M$ be a complete manifold with boundary and let $E\to M$ be a Dirac bundle over $M$. Let $\DD= D+\Psi$ be a formally self-adjoint strongly Callias-type operator on $E$. Assume that the boundary $\pM$ of $M$ is a disjoint union of finitely many connected components $\pM= \bigsqcup_{j=1}^kN_j$. Fix $\epsilon:= (\epsilon_1,\ldots,\epsilon_k)$ with $\epsilon_j=\pm$ and set $B^\epsilon= \bigoplus_{j=1}^kB_j^{\epsilon_j}$. Then
\begin{equation}\label{E:mixindfor}
\ind\DD_{B^\epsilon}\;=\;\sum_{\{j\,:\,\epsilon_j=+\}}\ind\AAA_j\;=\;-\sum_{\{j\,:\,\epsilon_j=-\}}\ind\AAA_j.
\end{equation}
\end{theorem}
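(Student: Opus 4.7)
The plan is as follows. First, by applying the arguments of Section~\ref{S:locbvp} on each connected component $N_j$ separately and assembling the results using $\pM=\bigsqcup_j N_j$, the subspace $B^\epsilon=\bigoplus_j B_j^{\epsilon_j}$ is an elliptic boundary condition for $\DD$, so $\DD_{B^\epsilon}$ is a Fredholm operator with a well-defined index by Section~\ref{SS:boundaryvalue}.

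The main step is to compare $\ind\DD_{B^\epsilon}$ with the uniform-sign indices $\ind\DD_{B^{(-,\ldots,-)}}$ and $\ind\DD_{B^{(+,\ldots,+)}}$ by means of the Fredholm pair formula \eqref{E:indfor-Fredpair-1} from Theorem~\ref{T:indfor-Fredpair}. It is essential to use \eqref{E:indfor-Fredpair-1} here rather than \eqref{E:indfor-Fredpair-2}, because (cf. Remark~\ref{R:notFredholmpair}) the intersection $B^{(+,\ldots,+)}\cap B^\epsilon=\bigoplus_{\epsilon_j=+}B_j^+$ is infinite-dimensional whenever some $\epsilon_j=+$, so $(B_2^\perp,B_1)$ is not a Fredholm pair. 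Taking $B_1=B^\epsilon$ and $B_2=B^{(-,\ldots,-)}$, formula \eqref{E:indfor-Fredpair-1} yields
\[
\ind\DD_{B^\epsilon}\;-\;\ind\DD_{B^{(-,\ldots,-)}}\;=\;\ind(H_{[0,\infty)}^{1/2}(\AAA),B^\epsilon)\;-\;\ind(H_{[0,\infty)}^{1/2}(\AAA),B^{(-,\ldots,-)}).
\]

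Since $\pM$ is a disjoint union, $\AAA=\bigoplus_j\AAA_j$ and $H_{[0,\infty)}^{1/2}(\AAA)=\bigoplus_j H_{[0,\infty)}^{1/2}(\AAA_j)$; consequently both Fredholm pairs above decompose as direct sums of Fredholm pairs over the components, and their indices are the corresponding sums. Applying Proposition~\ref{P:FredpairB+} on each $N_j$ gives $\ind(H_{[0,\infty)}^{1/2}(\AAA_j),B_j^+)=\dim\ker\AAA_j^+$ and $\ind(H_{[0,\infty)}^{1/2}(\AAA_j),B_j^-)=\dim\ker\AAA_j^-$, so the contribution from each index $j$ with $\epsilon_j=-$ cancels, while each $j$ with $\epsilon_j=+$ contributes $\dim\ker\AAA_j^+-\dim\ker\AAA_j^-=\ind\AAA_j$. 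The right-hand side therefore simplifies to $\sum_{\epsilon_j=+}\ind\AAA_j$.

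To finish, I invoke Proposition~\ref{P:vanishing}, which applies because $E_\pM=E_\pM^+\oplus E_\pM^-$ is the natural grading by $c(\tau)$ (so $\DD$ has the form \eqref{E:Calliasdecomp2} near each $N_j$), yielding $\ind\DD_{B^{(-,\ldots,-)}}=\ind\DD_{B^{(+,\ldots,+)}}=0$. Substituting gives the first equality of \eqref{E:mixindfor}; repeating the argument with $B_2=B^{(+,\ldots,+)}$ produces the second. The one substantive point is recognizing that Proposition~\ref{P:vanishing} supplies the vanishing baseline that converts the relative Fredholm-pair index formula into the absolute formula asserted; with that in hand, the rest is the componentwise bookkeeping described above.
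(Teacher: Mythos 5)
Your proof is correct and follows essentially the same route as the paper: ellipticity of $B^\epsilon$ by componentwise arguments, the relative index formula \eqref{E:indfor-Fredpair-1} (with the correct observation that \eqref{E:indfor-Fredpair-2} is unavailable), Proposition~\ref{P:FredpairB+} applied on each $N_j$, and Proposition~\ref{P:vanishing} as the vanishing baseline. The only cosmetic differences are that the paper routes the comparison through $\DD_{\rm APS}$ and obtains the second equality from Corollary~\ref{C:vanishing}, whereas you compare directly with $B^{(-,\ldots,-)}$ and rerun the argument with $B^{(+,\ldots,+)}$ --- equivalent bookkeeping.
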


\begin{proof}
The second equlity of \eqref{E:mixindfor} follows from Corollary \ref{C:vanishing}.

We can apply Proposition~\ref{P:FredpairB+}  to each boundary component to conclude that $(H_{[0,\infty)}^{1/2}(\AAA_j),B_j^{\epsilon_j})$ is a Fredholm pair in $H_{\AAA_j}^{1/2}(N_j,E_{N_j})$ and
\[
\ind(H_{[0,\infty)}^{1/2}(\AAA_j),B_j^{\epsilon_j})\;=\;\dim\ker\AAA_j^{\epsilon_j}.
\]
As in the proof of Theorem \ref{T:inddiff}, using \eqref{E:indfor-Fredpair-1} we obtain
\[
	\ind\DD_{B^\epsilon}-\ind\DD_{\rm APS}
	\;=\;
	\sum_{j=1}^k\ind(H_{[0,\infty)}^{1/2}(\AAA_j),B_j^{\epsilon_j})
	\;=\;
	\sum_{j=1}^k\dim\ker\AAA_j^{\epsilon_j}
\]
and
\[
	\ind\DD_{B^-}-\ind\DD_{\rm APS}
	\;=\;
	\sum_{j=1}^k\ind(H_{[0,\infty)}^{1/2}(\AAA_j),B_j^-)
	\;=\;
	\sum_{j=1}^k\dim\ker\AAA_j^-.
\]
By Proposition \ref{P:vanishing}, $\ind\DD_{B^-}=0$. Hence, 
\[
\begin{aligned}
	\ind\DD_{B^\epsilon}
	\;&=\;
	(\ind\DD_{B^\epsilon}-\ind\DD_{\rm APS})
		\,-\,(\ind\DD_{B^-}-\ind\DD_{\rm APS}) \\
	&=\;
	\sum_{j=1}^k(\dim\ker\AAA_j^{\epsilon_j}-\dim\ker\AAA_j^-) \
	=\;\sum_{\{j\,:\,\epsilon_j=+\}}\ind\AAA_j.
\end{aligned}
\]
This completes the proof.
\end{proof}

\begin{bibdiv}
\begin{biblist}

\bib{Anghel93Callias}{article}{
      author={Anghel, N.},
       title={On the index of {C}allias-type operators},
        date={1993},
        ISSN={1016-443X},
     journal={Geom. Funct. Anal.},
      volume={3},
      number={5},
       pages={431\ndash 438},
         url={http://dx.doi.org/10.1007/BF01896237},
}

\bib{AtSinger84}{article}{
      author={Atiyah, M.~F.},
      author={Singer, I.~M.},
       title={Dirac operators coupled to vector potentials},
        date={1984},
        ISSN={0027-8424},
     journal={Proc. Nat. Acad. Sci. U.S.A.},
      volume={81},
      number={8, , Phys. Sci.},
       pages={2597\ndash 2600},
         url={https://doi-org.ezproxy.neu.edu/10.1073/pnas.81.8.2597},
      review={\MR{742394}},
}

\bib{BarBallmann12}{incollection}{
      author={B\"ar, C.},
      author={Ballmann, W.},
       title={Boundary value problems for elliptic differential operators of
  first order},
        date={2012},
   booktitle={Surveys in differential geometry. {V}ol. {XVII}},
      series={Surv. Differ. Geom.},
      volume={17},
   publisher={Int. Press, Boston, MA},
       pages={1\ndash 78},
         url={http://dx.doi.org/10.4310/SDG.2012.v17.n1.a1},
}

\bib{BarStrohmaier16}{article}{
      author={B\"ar, C.},
      author={Strohmaier, A.},
       title={A rigorous geometric derivation of the chiral anomaly in curved
  backgrounds},
        date={2016},
        ISSN={0010-3616},
     journal={Comm. Math. Phys.},
      volume={347},
      number={3},
       pages={703\ndash 721},
         url={https://doi.org/10.1007/s00220-016-2664-1},
      review={\MR{3551253}},
}

\bib{Bertlmann01}{book}{
      author={Bertlmann, R.~A.},
       title={Anomalies in quantum field theory (international series of
  monographs on physics)},
   publisher={Clarendon Press},
        date={2001},
        ISBN={0198507623},
  url={https://www.amazon.com/Anomalies-Quantum-International-Monographs-Physics/dp/0198507623?SubscriptionId=0JYN1NVW651KCA56C102&tag=techkie-20&linkCode=xm2&camp=2025&creative=165953&creativeASIN=0198507623},
}

\bib{BW93}{book}{
      author={Boo\ss~Bavnbek, Bernhelm},
      author={Wojciechowski, Krzysztof~P.},
       title={Elliptic boundary problems for {D}irac operators},
      series={Mathematics: Theory \& Applications},
   publisher={Birkh\"auser Boston, Inc., Boston, MA},
        date={1993},
        ISBN={0-8176-3681-1},
         url={https://doi.org/10.1007/978-1-4612-0337-7},
      review={\MR{1233386}},
}

\bib{BrCecchini17}{article}{
      author={Braverman, M.},
      author={Cecchini, S.},
       title={Callias-type operators in von {N}eumann algebras},
        date={2018},
        ISSN={1559-002X},
     journal={The Journal of Geometric Analysis},
      volume={28},
      number={1},
       pages={546\ndash 586},
         url={https://doi.org/10.1007/s12220-017-9832-1},
}

\bib{BrShi17b}{article}{
      author={Braverman, M.},
      author={Shi, P.},
       title={{A}{P}{S} index theorem for even-dimensional manifolds with
  non-compact boundary},
        date={201708},
     journal={arXiv preprint arXiv:1708.08336},
      eprint={1708.08336},
         url={https://arxiv.org/abs/1708.08336},
}

\bib{BrShi16}{article}{
      author={Braverman, Maxim},
      author={Shi, Pengshuai},
       title={Cobordism invariance of the index of {C}allias-type operators},
        date={2016},
        ISSN={0360-5302},
     journal={Comm. Partial Differential Equations},
      volume={41},
      number={8},
       pages={1183\ndash 1203},
         url={http://dx.doi.org/10.1080/03605302.2016.1183214},
      review={\MR{3532391}},
}

\bib{BrShi17}{article}{
      author={Braverman, Maxim},
      author={Shi, Pengshuai},
       title={The {A}tiyah-{P}atodi-{S}inger index on manifolds with
  non-compact boundary},
        date={2017},
     journal={arXiv:1706.06737 [math.DG]},
      eprint={1706.06737},
         url={http://arxiv.org/abs/1706.06737},
}

\bib{Bunke95}{article}{
      author={Bunke, Ulrich},
       title={A {$K$}-theoretic relative index theorem and {C}allias-type
  {D}irac operators},
        date={1995},
        ISSN={0025-5831},
     journal={Math. Ann.},
      volume={303},
      number={2},
       pages={241\ndash 279},
         url={http://dx.doi.org/10.1007/BF01460989},
      review={\MR{1348799}},
}

\bib{Freed86}{article}{
      author={Freed, D.},
       title={Determinants, torsion, and strings},
        date={1986},
        ISSN={0010-3616},
     journal={Comm. Math. Phys.},
      volume={107},
      number={3},
       pages={483\ndash 513},
         url={http://projecteuclid.org.ezproxy.neu.edu/euclid.cmp/1104116145},
      review={\MR{866202}},
}

\bib{Freed98}{article}{
      author={Freed, Daniel~S.},
       title={Two index theorems in odd dimensions},
        date={1998},
        ISSN={1019-8385},
     journal={Comm. Anal. Geom.},
      volume={6},
      number={2},
       pages={317\ndash 329},
         url={https://doi-org.ezproxy.neu.edu/10.4310/CAG.1998.v6.n2.a4},
      review={\MR{1651419}},
}

\bib{HoravaWitten96}{article}{
      author={Ho\v{r}ava, P.},
      author={Witten, E.},
       title={Heterotic and type {I} string dynamics from eleven dimensions},
        date={1996},
        ISSN={0550-3213},
     journal={Nuclear Phys. B},
      volume={460},
      number={3},
       pages={506\ndash 524},
         url={https://doi-org.ezproxy.neu.edu/10.1016/0550-3213(95)00621-4},
      review={\MR{1381609}},
}

\bib{LawMic89}{book}{
      author={Lawson, H.~B.},
      author={Michelsohn, M.-L.},
       title={Spin geometry},
   publisher={Princeton University Press},
     address={Princeton, New Jersey},
        date={1989},
}

\bib{Shi18}{article}{
      author={Shi, Pengshuai},
       title={Cauchy data spaces and {A}tiyah--{P}atodi--{S}inger index on
  non-compact manifolds},
        date={2018},
        ISSN={0393-0440},
     journal={J. Geom. Phys.},
      volume={133},
       pages={81\ndash 90},
}

\bib{Witten85GravAnom}{article}{
      author={Witten, E.},
       title={Global gravitational anomalies},
        date={1985},
        ISSN={0010-3616},
     journal={Comm. Math. Phys.},
      volume={100},
      number={2},
       pages={197\ndash 229},
         url={http://projecteuclid.org.ezproxy.neu.edu/euclid.cmp/1103943444},
}

\end{biblist}
\end{bibdiv}

\end{document}